\documentclass[12pt, a4paper]{article}

\usepackage{graphicx}

\usepackage[T1]{fontenc}
\usepackage{multirow}
\usepackage{float}
\usepackage[centertags]{amsmath}
\usepackage{amsfonts}
\usepackage{amssymb}
\usepackage{amsthm}
\usepackage{authblk}

\DeclareMathOperator{\Gal}{Gal}
\DeclareMathOperator{\Ker}{Ker}
\DeclareMathOperator{\diag}{diag}

\newtheorem{thm}{Theorem}[section]
\newtheorem{cor}[thm]{Corollary}
\newtheorem{lem}[thm]{Lemma}
\newtheorem{prop}[thm]{Proposition}
\theoremstyle{definition}
 \newtheorem{defn}[thm]{Definition}
\theoremstyle{remark}
 \newtheorem{rem}[thm]{Remark}
\theoremstyle{remark}
 \newtheorem{ex}[thm]{Example}
\theoremstyle{definition}
 \newtheorem{conj}[thm]{Conjecture}

\numberwithin{equation}{section}

\begin{document}
\begin{sloppy}

\title{Classification of quantum groups and Belavin--Drinfeld cohomologies for orthogonal and symplectic Lie algebras}
\author[*]{Boris Kadets}
\author[*]{Eugene Karolinsky}
\author[**]{Iulia Pop}
\author[***]{Alexander Stolin}

\affil[*]{\small{Department of Mechanics and Mathematics, Kharkov National University}}
\affil[**]{Department of Mathematical Sciences, Chalmers University of Technology}
\affil[***]{ Department of Mathematics, Gothenburg University, Sweden}
\maketitle
\begin{abstract}
In this paper we continue to study Belavin--Drinfeld cohomology introduced in \cite{SP} and related to the classification of quantum groups whose quasi-classical limit is a given simple complex Lie algebra $\mathfrak{g}$. Here we compute Belavin--Drinfeld cohomology for all non-skewsymmetric $r$-matrices from the Belavin--Drinfeld list for simple Lie algebras of type $B$, $C$, and $D$.
%In the present article we discuss the classification of quantum groups whose quasi-classical limit is a given simple complex Lie algebra $\mathfrak{g}$. This problem reduces to the classification of all Lie bialgebra
%structures on $\mathfrak{g}(\mathbb{K})$, where $\mathbb{K}=\mathbb{C}((\hbar))$. The associated classical double
%is of the form $\mathfrak{g}(\mathbb{K})\otimes_{\mathbb{K}} A$, where $A$ is one of the following: $\mathbb{K}[\varepsilon]$, where $\varepsilon^{2}=0$, $\mathbb{K}\oplus\mathbb{K}$ or $\mathbb{K}[j]$, where $j^{2}=\hbar$. The first case relates to quasi-Frobenius Lie algebras. In the second and third cases we introduce a theory of Belavin--Drinfeld cohomology associated to any non-skewsymmetric $r$-matrix from the Belavin--Drinfeld list \cite{BD}. We prove a one-to-one correspondence between gauge equivalence classes of Lie bialgebra structures on $\mathfrak{g}(\mathbb{K})$ and cohomology classes (in case II) and twisted cohomology classes (in case III) associated to any non-skewsymmetric $r$-matrix.

 \textbf{Mathematics Subject Classification (2010):} 17B37, 17B62.

 \textbf{Keywords:} Quantum groups, Lie bialgebras, classical double, $r$-matrix.
\end{abstract}

\section{Introduction and preliminaries}
In the papers (\cite{EK1}, \cite{EK2}) Kazhdan and Etingof constructed the equivalence between the categories of quantum groups and Lie bialgebras over $\mathbb{C}[[\hslash]]$. The precise statement is in the following theorem.
\begin{thm}\label{KazEti}
Let $\mathrm{Qgroup}$ be the category of quantum groups, i.e.  topologically free cocommutative $\mathrm{mod}\ \hslash$  Hopf algebras over $\mathbb{C}[[\hslash]]$ such that $H/\hslash H$ is a universal enveloping algebra of some Lie algebra $\mathfrak{g}$ over $\mathbb{C}$. Let $\mathrm{LieBialg}$ be the category of topologically free Lie bialgebras over $\mathbb{C}[[\hslash]]$ with $\delta = 0\ \mathrm{mod}\ \hslash$. Then there exists a functor $Lie: \mathrm{Qgroup} \to \mathrm{LieBialg}$ that is an equivalence of categories.
\end{thm}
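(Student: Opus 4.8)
The plan is to follow Etingof and Kazhdan and produce an explicit quasi-inverse $Q\colon \mathrm{LieBialg}\to\mathrm{Qgroup}$ — a \emph{universal quantization functor} — and then check that $\mathrm{Lie}\circ Q\cong\mathrm{Id}$ and $Q\circ\mathrm{Lie}\cong\mathrm{Id}$. The functor $\mathrm{Lie}$ is the easy half: if $H$ is a quantum group, then $H/\hbar H$ is cocommutative, so by the Cartier--Kostant--Milnor--Moore theorem it is $U(\mathfrak g)$ for a Lie algebra $\mathfrak g$ over $\mathbb C$, and $x\mapsto \hbar^{-1}\bigl(\Delta(x)-\Delta^{\mathrm{op}}(x)\bigr)\bmod\hbar$ defines a cobracket $\delta$ on $\mathfrak g$; keeping track of the $\hbar$-adic filtration of $H$ one gets in fact a topologically free $\mathbb C[[\hbar]]$-Lie bialgebra with $\delta\equiv 0\bmod\hbar$, and Hopf morphisms respect $\hbar$, $\varepsilon$ and $\Delta$, so this is functorial. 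Thus $\mathrm{Lie}\colon\mathrm{Qgroup}\to\mathrm{LieBialg}$ is well defined, and it suffices to construct $Q$ and establish the two compositions.

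To build $Q$, fix once and for all a Drinfeld associator $\Phi$ (for definiteness the Knizhnik--Zamolodchikov associator), i.e.\ a group-like solution of the pentagon and hexagon relations in the prounipotent completion of the free group on two generators. Given $\mathfrak a\in\mathrm{LieBialg}$, form its Drinfeld double $\mathfrak g=D(\mathfrak a)=\mathfrak a\oplus\mathfrak a^{*}$, a quasitriangular Lie bialgebra whose $r$-matrix has symmetric part the canonical Casimir element $t\in\mathfrak g\otimes\mathfrak g$. Drinfeld's construction then turns the category of topologically free $\hbar$-adic $\mathfrak g$-modules into a braided monoidal category with associativity constraint $\Phi(\hbar t_{12},\hbar t_{23})$ and braiding $e^{\hbar t/2}\circ(\text{flip})$. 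Inside this category take the two Verma modules $M_{\pm}$ induced from the Lagrangian subalgebras $\mathfrak a$ and $\mathfrak a^{*}$; by PBW they are topologically free ($M_{+}\cong U(\mathfrak a^{*})$ and $M_{-}\cong U(\mathfrak a)$ as $\mathbb C[[\hbar]]$-modules), and there is a canonical element of $\operatorname{Hom}_{\mathfrak g}(M_{-},M_{+})$ relating them. One then shows that the $\mathbb C[[\hbar]]$-linear functor $F(V)=\operatorname{Hom}_{\mathfrak g}\!\bigl(M_{-},M_{+}\,\hat\otimes\,V\bigr)$ is exact and faithful with $F(V)\cong V$, and — the technical core — constructs out of $\Phi$ and the Verma pairing a natural isomorphism $J_{V,W}\colon F(V)\otimes F(W)\xrightarrow{\ \sim\ }F(V\otimes W)$ satisfying the tensor-functor coherence. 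Thus $F$ is a braided tensor functor, and Tannakian reconstruction (in the topologically free $\mathbb C[[\hbar]]$-linear setting) yields a Hopf algebra $U_{\hbar}(\mathfrak g)$ acting on all $F(V)$; a further step isolates inside it the Hopf subalgebra $U_{\hbar}(\mathfrak a)$ quantizing the Lagrangian, and we put $Q(\mathfrak a):=U_{\hbar}(\mathfrak a)$. Since the double, the Verma modules, $F$ and $J$ are all natural in $\mathfrak a$, $Q$ is a functor.

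It remains to compare $Q$ and $\mathrm{Lie}$. Reducing the construction modulo $\hbar$ makes $\Phi$ and the braiding trivial and $F$ the ordinary forgetful functor, and one computes $U_{\hbar}(\mathfrak a)/\hbar\cong U(\mathfrak a/\hbar\mathfrak a)$; extracting the first-order term of $\Delta-\Delta^{\mathrm{op}}$ recovers exactly the cobracket of $\mathfrak a$, so $\mathrm{Lie}\circ Q\cong\mathrm{Id}$. For $Q\circ\mathrm{Lie}\cong\mathrm{Id}$ one notes that it is the identity modulo $\hbar$ and proceeds by deformation theory, constructing an isomorphism $H\cong Q(\mathrm{Lie}(H))$ order by order in $\hbar$: the obstructions to extending an isomorphism and to lifting morphisms lie in Hochschild/Gerstenhaber-type cohomology groups of $U(\mathfrak g)$ that vanish in the relevant degrees, so the isomorphisms and the lifts of morphisms exist (and are unique up to the expected ambiguity). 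This shows $\mathrm{Lie}$ is essentially surjective and fully faithful, i.e.\ an equivalence, with quasi-inverse $Q$. I expect the main obstacle to be the construction of the tensor structure $J$ on the fiber functor and the verification that the pentagon/hexagon for $\Phi$ translate into the tensor-functor axioms while everything stays $\hbar$-adically convergent and topologically free; the deformation-theoretic rigidity needed for $Q\circ\mathrm{Lie}\cong\mathrm{Id}$ is the other delicate point.
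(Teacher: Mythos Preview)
The paper does not prove this theorem at all: it is quoted verbatim as a known result of Etingof and Kazhdan, with a citation to \cite{EK1}, \cite{EK2}, and is used as a black box to motivate why classifying Lie bialgebras over $\mathbb{C}((\hslash))$ amounts to classifying quantum groups. So there is no ``paper's own proof'' to compare against.

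That said, your sketch is a faithful outline of the Etingof--Kazhdan strategy that the paper is citing: the fiber functor $F=\operatorname{Hom}_{\mathfrak g}(M_-,M_+\,\hat\otimes\,\cdot)$ on Drinfeld's braided category built from an associator, the tensor structure $J$ coming from the Verma pairing, Tannakian reconstruction to get $U_\hbar(\mathfrak g)$ and then the sub-Hopf algebra $U_\hbar(\mathfrak a)$, the semiclassical check $\mathrm{Lie}\circ Q\cong\mathrm{Id}$, and finally the rigidity/deformation argument (which is the content of \cite{EK2}) for $Q\circ\mathrm{Lie}\cong\mathrm{Id}$. A couple of minor caveats: the ``canonical element of $\operatorname{Hom}_{\mathfrak g}(M_-,M_+)$'' is not quite the right object---what one actually uses is the pairing $M_+\otimes M_-\to\mathbf 1$ (and the comultiplication-type maps $M_\pm\to M_\pm\otimes M_\pm$) to build $J$; and the essential surjectivity in \cite{EK2} is carried out via a careful cohomological analysis of deformations of $U(\mathfrak g)$ and its bialgebra structure rather than a generic ``obstructions vanish'' slogan, so if you were to write this out in full you would need to be precise about which cohomology groups (co-Hochschild of the coalgebra, compatibility with the algebra) are in play. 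But none of this is a gap in the sense of a wrong approach; it is exactly the route the cited references take.
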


This theorem can be seen as a quantization of the classical Lie theorem on the equivalence between the categories of simply connected Lie groups and Lie algebras. Lie theorem reduces the problem of classification for (say, semisimple) Lie groups to the same problem for Lie algebras that is much more simple. The same approach works in the quantum group case.

Our goal is to classify quantum groups whose quasiclassical limit is a given semisimple Lie algebra $\mathfrak{g}$. In order to do this we will classify Lie bialgebra structures on $\mathfrak{g}(\mathbb{C}[[\hslash]])$. From Theorem \ref{KazEti} we see that this classification will indeed be a classification of quantum groups.

Any Lie bialgebra structure on $\mathfrak{g}(\mathbb{C}[[\hslash]])$ can be lifted to a Lie bialgebra structure on $\mathfrak{g}(\mathbb{C}((\hslash)))$. Also any Lie bialgebra structure on $\mathfrak{g}(\mathbb{C}((\hslash)))$ defines a Lie bialgebra structure on $\mathfrak{g}(\mathbb{C}[[\hslash]])$  after multiplication by an appropriate power of $\hslash$. So we are left with the problem of classification of Lie bialgebra structures on $\mathfrak{g}(\mathbb{C}((\hslash)))$.

Summarizing the discussion above we have obtained the following: the classification of quantum groups is reduced to the classification of Lie bialgebras over $\mathbb{C}((\hslash))$.

Recall the Belavin-Drinfeld classification:

\begin{thm}[\cite{BD}, \cite{ES}]
Let $\mathfrak{g}$ be a simple Lie algebra over an algebraically closed field of characteristic zero. Then any Lie bialgebra structure on $\mathfrak{g}$ is a coboundary one. Let $r$ be a corresponding $r$-matrix. If $r$ is not skew-symmetric then for some root decomposition we have
$$r = r_0 + \sum _ {\alpha > 0} e_{-\alpha}\otimes e_{\alpha} + \sum_{\alpha \in \mathrm{Span}( \Gamma_1)^{+}}\sum_{k \in \mathbb{N}}e_{-\alpha} \wedge e_{\tau^k(\alpha)} .$$
Here $\Gamma_1, \Gamma_2$ are subsets of the set of simple roots, $\tau: \Gamma_1 \to \Gamma_2$ is an isometric bijection, and for every $\alpha \in \Gamma_1$ there exists $k \in \mathbb{N}$ such that $\tau^k(\alpha) \in \Gamma_1 \setminus \Gamma_2$. The triple $(\Gamma_1, \Gamma_2,\tau)$ is called \emph{admissible}.
The tensor $r_0 \in \mathfrak{h} \otimes \mathfrak{h}$ must satisfy the following two conditions:

(1) $r_0+r_0^{21}=\sum t_k \otimes t_k$, where $t_k$ is an orthonormal basis of $\mathfrak{h}$,

(2) for any $\alpha \in \Gamma_1$ we have $(\tau(\alpha)\otimes \mathrm{id} + \mathrm{id} \otimes \alpha)r_0 = 0 $.
\end{thm}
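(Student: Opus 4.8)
The plan is to reproduce the Belavin--Drinfeld argument, organized into a cohomological reduction, a structural analysis of the resulting solution of the classical Yang--Baxter equation, and an explicit determination of coefficients on root spaces.

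\emph{Reduction to a coboundary.} A Lie bialgebra structure on $\mathfrak g$ is a cocommutator $\delta\colon\mathfrak g\to\mathfrak g\wedge\mathfrak g$ which is a $1$-cocycle for the adjoint action of $\mathfrak g$ on $\mathfrak g\wedge\mathfrak g$. Since $\mathfrak g$ is semisimple and the base field has characteristic zero, Whitehead's lemma gives $H^1(\mathfrak g,\mathfrak g\wedge\mathfrak g)=0$, so $\delta(x)=\mathrm{ad}_x(r)$ for some $r\in\mathfrak g\wedge\mathfrak g$; this already establishes the first assertion. Adding to $r$ a multiple of the Casimir does not change $\delta$ (a symmetric invariant part is annihilated by the coboundary operator), and the co-Jacobi identity for $\delta$ is equivalent to $\mathrm{CYB}(r):=[r^{12},r^{13}]+[r^{12},r^{23}]+[r^{13},r^{23}]$ being $\mathfrak g$-invariant. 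If $r$ is not skew-symmetric, its symmetric part is a nonzero invariant in $S^2\mathfrak g$, hence a nonzero scalar times the Casimir; after rescaling one arranges $r+r^{21}=\sum t_k\otimes t_k$ for an orthonormal basis $\{t_k\}$, which is condition (1), and with this normalization (using one-dimensionality of $(\Lambda^3\mathfrak g)^{\mathfrak g}$ and invariance of the Casimir) $r$ in fact solves the unmodified equation $\mathrm{CYB}(r)=0$.

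\emph{Structural analysis (the technical heart).} View $r$ as a linear map $\widehat r\colon\mathfrak g^*\to\mathfrak g$, $\xi\mapsto(\xi\otimes\mathrm{id})(r)$. Condition (1) says that the symmetric part of the bilinear form $r$ on $\mathfrak g^*$ is the nondegenerate form induced by the Killing form; together with CYBE this forces $\mathfrak a_-:=\mathrm{Im}\,\widehat r$ and $\mathfrak a_+:=\mathrm{Im}\,\widehat{r^{21}}$ to be subalgebras of $\mathfrak g$ with $\mathfrak a_++\mathfrak a_-=\mathfrak g$, in fact parabolic (their Killing-orthogonals are their nilpotent radicals). Since the ground field is algebraically closed, after conjugating $r$ by an inner automorphism we may assume $\mathfrak a_\pm$ contain a fixed pair of opposite Borel subalgebras $\mathfrak b_\pm\supset\mathfrak h$ --- this conjugation is what ``for some root decomposition'' refers to --- so that $\mathfrak a_-,\mathfrak a_+$ become the standard parabolics attached to subsets $\Gamma_1,\Gamma_2$ of the simple roots, and analysing how $r$ pairs their Levi factors yields the isometric bijection $\tau\colon\Gamma_1\to\Gamma_2$. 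Finally, finite-dimensionality forbids $\tau$ from having cycles or infinite forward orbits inside $\Gamma_1\cap\Gamma_2$: a $\tau$-cycle would make the sum $\sum_k$ in the stated formula diverge and, combined with condition (2) below, would force $r_0$ to satisfy an inconsistent linear system. This is the admissibility of $(\Gamma_1,\Gamma_2,\tau)$.

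\emph{Determination of the coefficients.} Write $r=r_0+\sum_{\alpha}c_\alpha\,e_{-\alpha}\otimes e_\alpha+(\text{terms mixing distinct roots})$ with $r_0\in\mathfrak h\otimes\mathfrak h$, and project CYBE onto weight subspaces. The weight-zero component, given the normalization of the first step, forces $c_\alpha=1$ for every positive root $\alpha$ and shows that the only surviving mixed terms are $e_{-\alpha}\wedge e_{\tau^k(\alpha)}$ with $\alpha\in\mathrm{Span}(\Gamma_1)^{+}$, the coefficients being pinned down (up to the normalization of root vectors) by the $\mathfrak{sl}_2$-triples along each $\tau$-orbit; this produces exactly $\sum_{\alpha\in\mathrm{Span}(\Gamma_1)^+}\sum_k e_{-\alpha}\wedge e_{\tau^k(\alpha)}$. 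The components supported on a single positive root, once the rest of $r$ is known, reduce precisely to $(\tau(\alpha)\otimes\mathrm{id}+\mathrm{id}\otimes\alpha)r_0=0$ for $\alpha\in\Gamma_1$, which is condition (2). A direct check that every quadruple $(\Gamma_1,\Gamma_2,\tau,r_0)$ satisfying admissibility, (1) and (2) does solve CYBE completes the classification.

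\emph{Main obstacle.} The crux is the structural step: showing that $\mathfrak a_\pm$ are parabolic, that they can be brought simultaneously into standard position relative to one Cartan subalgebra, and that the discrete datum $(\Gamma_1,\Gamma_2,\tau)$ extracted from them is admissible, all of this modulo inner automorphisms so that the normal form is canonical. The cohomological reduction is soft and the coefficient computation is weight bookkeeping (though even there one must work to rule out root-space terms outside $\mathrm{Span}(\Gamma_1)^{+}$). An alternative route to the structural step runs through the classical double $D(\mathfrak g,\delta)$: nondegeneracy of $r+r^{21}$ identifies it with $\mathfrak g\oplus\mathfrak g$ equipped with the sum of the invariant forms, and the problem becomes the classification of Lagrangian subalgebras complementary to the diagonal, which again reproduces the Belavin--Drinfeld data.
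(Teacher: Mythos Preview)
The paper does not prove this theorem at all: it is stated with the citations \cite{BD} and \cite{ES} and used as background input for the rest of the work, so there is no ``paper's own proof'' to compare against. Your outline is a faithful high-level summary of the original Belavin--Drinfeld argument (and of the exposition in Etingof--Schiffmann), so in that sense it matches the sources the paper defers to.

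As a sketch it is sound, but a few points would need tightening if you intended this as an actual proof rather than a roadmap. First, the passage from ``$\mathrm{CYB}(r)$ is $\mathfrak g$-invariant'' to ``$\mathrm{CYB}(r)=0$'' after the normalization $r+r^{21}=\Omega$ is not just dimension-counting in $(\Lambda^3\mathfrak g)^{\mathfrak g}$; one really computes that the invariant produced by the Casimir part cancels, and this is where the specific normalization matters. Second, your admissibility argument (``a $\tau$-cycle would make the sum diverge'') is heuristic: in the actual proof the nilpotency of $\tau$ comes out of the requirement that a certain Cayley-type transform built from $\tau$ be well defined on the Levi factors, not from convergence of a formal sum. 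Third, the claim that the weight-zero component of CYBE ``forces $c_\alpha=1$'' conflates the normalization of root vectors with the content of the equation; the $c_\alpha$'s are fixed only after one has chosen a Chevalley system compatible with $\tau$, and showing such a choice exists is part of the work. None of these are fatal to the plan, but each hides a page or two of the references you would be reproducing.
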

Our goal is to obtain the version of this theorem over non-closed field.

\subsection{Setting}
In many cases we would prove the results for $B,C$ and $D$ series simultaneously. For this we will introduce some general notation, and then specify it for each case. Note that further in the text we would also make distinction between $D_n$ algebras for odd and even $n$.

By $\mathfrak{g}(F)$ we denote the split simple Lie algebra of rank $n$ over a field $F$ of characteristic zero. We will work with algebras from the $B,C,D$ series. By $G$ we denote a corresponding connected algebraic group, by $H$ a Cartan subgroup of $G$. The simple roots of $\mathfrak{g}$ are denoted $\alpha_1, ..., \alpha_n$. Let $e^{\alpha}$ be the character of $H$ that corresponds to the root $\alpha$. In all cases we will specify the isomorphism of $H$ with the standard torus $(F^*)^n$. Abusing notation, by $(d_1, ..., d_n)$ we denote the element of $H$ that corresponds to an element $(d_1, ..., d_n) \in (F^*)^n $.
We will now specify the choice of matrix representation and root decomposition for the series $B,C,D$. By $M$ we will denote the size of matrices in each matrix representation.\\

{\bf $B_n$ series}

Let $G$ be the subgroup of $SL(2n+1)$ consisting of the matrices satisfying $XBX^T=B$ (we have $M=2n+1$). Here $B$ is the matrix with $1$'s on the anti-diagonal and zeros elsewhere. Then $\mathfrak{g}$ is the algebra of matrices satisfying $XB+BX^T=0$. The Cartan subgroup $H$ (Cartan subalgebra, respectively) can be chosen to be the subset of diagonal matrices of $SL(2n+1)$ (respectively $\mathfrak{sl}(2n+1)$). %By $\alpha_1, ..., \alpha_n$ we denote the simple roots of $\mathfrak{g}$.
Simple root vectors are given by $e_{\alpha_i}=e_{i, i+1}-e_{2n+2-i, 2n+1-i}$. The Cartan subgroup is the subgroup of diagonal matrices of the form $\diag(d_1, ..., d_n, 1, d_n^{-1}, ..., d_1^{-1})=:(d_1, ..., d_n)$. We have $e^{\alpha_i}(d_1, ..., d_n)=d_id_{i+1}^{-1}$ for $i<n$ and $e^{\alpha_n}(d_1, ..., d_n)=d_n$.\\

{\bf $C_n$ series}

Let $G$ be the subgroup of $SL(2n)$ consisting of the matrices satisfying $XBX^T=B$ (we have $M=2n$). Here $B$ is the matrix with $1$'s on the upper half of the anti-diagonal, minus ones on the lower and zeros elsewhere. E.g. for $n=2$
$$B=\begin{pmatrix}
 0&  0&  0& 1 \\
 0&  0&  1& 0 \\
 0&  -1&  0& 0\\
-1&   0&  0& 0
\end{pmatrix}.$$
 Then $\mathfrak{g}$ is the algebra of matrices satisfying $XB+BX^T=0$. The Cartan subgroup $H$ (Cartan subalgebra, respectively) can be chosen to be the subset of diagonal matrices of $SL(2n)$ (respectively $\mathfrak{sl}(2n)$). %By $\alpha_1, ..., \alpha_n$ we denote the simple roots of $\mathfrak{g}$.
 Simple root vectors are given by $e_{\alpha_i}=e_{i, i+1}-e_{2n+2-i, 2n+1-i}$ for $i <n$ and $e_{\alpha_n}=e_{n, n+1}-e_{n+1, n}$. The Cartan subgroup is the subgroup of diagonal matrices of the form $\diag(d_1, ..., d_n, d_n^{-1}, ..., d_1^{-1})=:(d_1, ..., d_n)$. We have $e^{\alpha_i}(d_1, ..., d_n)=d_id_{i+1}^{-1}$ for $i<n$ and $e^{\alpha_n}=d_n^2$.\\

{\bf $D_n$ series}

Let $G$ be the subgroup of $SL(2n)$ consisting of the matrices satisfying $XBX^T=B$ (we have $M=2n$). Here $B$ is the matrix with $1$'s on the anti-diagonal and zeros elsewhere. Then $\mathfrak{g}$ is the algebra of matrices satisfying $XB+BX^T=0$. The Cartan subgroup $H$ (Cartan subalgebra, respectively) can be chosen to be the subset of diagonal matrices of $SL(2n)$ (respectively $\mathfrak{sl}(2n)$). %By $\alpha_1, ..., \alpha_n$ we denote the simple roots of $\mathfrak{g}$.
Simple root vectors are given by $e_{\alpha_i}=e_{i, i+1}-e_{2n+2-i, 2n+1-i}$ for $i <n$ and $e_{\alpha_n}=e_{n-1, n+1}-e_{n+2, n}$. The Cartan subgroup is the subgroup of diagonal matrices of the form $\diag(d_1, ..., d_n, d_n^{-1}, ..., d_1^{-1})=:(d_1, ..., d_n)$. We have $e^{\alpha_i}(d_1, ..., d_n)=d_id_{i+1}^{-1}$ for $i<n$ and $e^{\alpha_n}=d_{n-1}d_n$.

\subsection{Properties of $r$-matrices}
%Recall that to an admissible triple $(\Gamma_1, \Gamma_2, \tau)$ we can associate an $r$-matrix given by the formula $$r_{BD}=r_0 + \sum_{\alpha>0}e_{-\alpha} \otimes e_{\alpha} + \sum_{k \in \mathbb{N}, \alpha \in \Gamma_1}e_{-\alpha} \wedge e_{\tau^k{\alpha}}$$
%where $r_0 \in \mathfrak{h} \otimes \mathfrak{h}$ satisfies $r_0 + r_0^{21}=\Omega_0$ and $(\tau(\alpha)\otimes 1 + 1 \otimes \alpha)r_0 = 0$.
Let $Q: \mathfrak{g} \to \mathfrak{g}^*$ be the natural isomorphism given by Killing form $K$, i.e. $Q(x)(y)=K(x, y)$. Then $\Phi := \mathrm{id} \otimes Q : \mathfrak{g} \otimes \mathfrak{g} \to \mathfrak{g} \otimes \mathfrak{g}^*$ provides an isomorphism between $\mathfrak{g} \otimes \mathfrak{g}$ and $\mathrm{End}(\mathfrak{g})$.

\begin{thm}[\cite{SP}] \label{rstuff}
Let $r_{BD}$ be a non-skewsymmetric r-matrix from the Belavin-Drinfeld list. Denote by $S$ and $N$ the semisimple and nilpotent parts of $\Phi(r_{BD})$. Then $S=\sum_{\alpha>0}e_{-\alpha} \otimes e_{\alpha}+ H$, where $H$ acts by zero on each $\mathfrak{g}_{\alpha}$. The normalizers of the eigenspaces of $S$ with eigenvalues $0$ and $1$ are the Borel subalgebras $\mathfrak{b}_+$ and $\mathfrak{b}_{-}$ respectively. The normalizer of any other eigenspace is the Cartan subalgebra $\mathfrak{h}$.
\end{thm}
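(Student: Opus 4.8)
The plan is to analyze $\Phi(r_{BD})$ directly from the explicit form of the Belavin--Drinfeld $r$-matrix. Write $r_{BD} = r_0 + \sum_{\alpha>0} e_{-\alpha}\otimes e_\alpha + \sum_{\alpha,k} e_{-\alpha}\wedge e_{\tau^k(\alpha)}$. The first observation is that the operator $\Phi\bigl(r_0 + \sum_{\alpha>0} e_{-\alpha}\otimes e_\alpha\bigr)$ is (up to normalization of the Killing form on root vectors) the projection-type operator that acts on $\mathfrak{g}_\alpha$ for $\alpha>0$ as $\mathrm{id}$, on $\mathfrak{g}_{-\alpha}$ as $0$, and on $\mathfrak{h}$ by the endomorphism determined by $r_0$; condition (1) on $r_0$ (that $r_0 + r_0^{21}$ is the Casimir element of $\mathfrak{h}$) forces this Cartan part to be nilpotent, since its symmetrization is $\mathrm{id}_{\mathfrak h}$ corresponding to $\frac12(\text{Casimir})+\text{skew}$... more precisely one checks $\Phi$ sends the symmetric Casimir to the identity, so the symmetric part of the whole operator is the sum of projections onto $\mathfrak g_\alpha$ ($\alpha>0$), onto $\mathfrak h$ with coefficient $\tfrac12$... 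I would instead argue: the semisimple part $S$ is characterized as the semisimple operator commuting with $N$ whose eigenvalues match those of the associated-graded, and a short computation with the bracket $[r_{12},r_{13}]+\cdots$ (the CYBE, which $r_{BD}$ satisfies) translates into the statement that $S$ is diagonalizable with $S|_{\mathfrak g_\alpha}=\mathrm{id}$, $S|_{\mathfrak g_{-\alpha}}=0$, $S|_{\mathfrak h}=0$. This gives $S = \sum_{\alpha>0} e_{-\alpha}\otimes e_\alpha + H$ with $H \in \mathfrak h\otimes\mathfrak h$ acting as zero on each $\mathfrak g_\alpha$ (indeed $H$ can be taken to be the symmetric Casimir part, which kills root spaces since $[\mathfrak h,\mathfrak h]=0$ acts trivially — one must be careful that $\Phi$ of an element of $\mathfrak h\otimes\mathfrak h$ acts on $\mathfrak g_\alpha$ by the scalar $\langle\alpha,\cdot\rangle$ applied to the second leg, and symmetry plus condition (1) force this to vanish).

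Next I would identify the eigenspaces. For the eigenvalue $0$: $\ker S = \bigoplus_{\alpha>0}\mathfrak g_{-\alpha} \oplus \mathfrak h = \mathfrak b_-$ as a vector space. For eigenvalue $1$: the $1$-eigenspace of $S$ is $\bigoplus_{\alpha>0}\mathfrak g_\alpha$, whose normalizer in $\mathfrak g$ is $\mathfrak b_+$. Here I need to be careful about the labelling in the statement — the theorem says the $0$-eigenspace has normalizer $\mathfrak b_+$ and the $1$-eigenspace has normalizer $\mathfrak b_-$; this is consistent provided one checks that $\mathrm{normalizer}(\mathfrak b_-) = \mathfrak b_-$ but $\mathrm{normalizer}\bigl(\bigoplus_{\alpha>0}\mathfrak g_{-\alpha}\oplus\mathfrak h\bigr)$, computed as $\{x : [x,\ker S]\subseteq \ker S\}$, equals $\mathfrak b_+$ — this is a purely Lie-theoretic fact about the nilradical/Borel (the normalizer of the opposite nilradical is the given Borel). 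I would verify both normalizer computations by the standard root-space bracket relations $[\mathfrak g_\alpha,\mathfrak g_\beta]\subseteq \mathfrak g_{\alpha+\beta}$.

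For the remaining eigenspaces: any eigenvalue $\lambda \ne 0,1$ of $S$ can only come from the Cartan part $H$, since on root spaces $S$ has eigenvalues $0$ and $1$ only. The $\lambda$-eigenspace is therefore contained in $\mathfrak h$, and in fact equals a subspace $V_\lambda \subseteq \mathfrak h$. Its normalizer contains $\mathfrak h$ (as $\mathfrak h$ is abelian), and to show equality one must check that no root vector $e_\beta$ normalizes $V_\lambda$: if $e_\beta$ normalized $V_\lambda \subseteq \mathfrak h$ then $[e_\beta, V_\lambda] \subseteq V_\lambda \cap \mathfrak g_\beta = 0$, forcing $\beta(V_\lambda)=0$; one then argues — using condition (2) on $r_0$, which constrains precisely how $H$ pairs roots in $\Gamma_1$ with roots in $\Gamma_2$, together with the admissibility of $(\Gamma_1,\Gamma_2,\tau)$ — that the $V_\lambda$ for $\lambda\notin\{0,1\}$ together span enough of $\mathfrak h$ that no single root can vanish on all relevant ones; more simply, it suffices that $\bigcap_\lambda \ker(\beta|_{V_\lambda})$ considerations rule this out. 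This last point — pinning down the normalizer of the "generic" eigenspaces to be exactly $\mathfrak h$ and not larger — is the main obstacle, because it requires genuinely using the arithmetic of $r_0$ (conditions (1) and (2)) and the admissibility condition, rather than soft structure theory; I expect to handle it by a direct case analysis of $H$ built from condition (2), showing its eigenspaces in $\mathfrak h$ are in "general position" with respect to the root hyperplanes.
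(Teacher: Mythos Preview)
The paper does not give its own proof of this theorem: it is stated with a citation to \cite{SP} and used as a black box. So there is no in-paper argument to compare against; any proof you supply would be filling in material the authors deliberately imported.

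That said, your proposal has a concrete error and a genuine gap. The error is in the eigenspace identification. Compute $\Phi(e_{-\alpha}\otimes e_\alpha)(x)=e_{-\alpha}\,K(e_\alpha,x)$: this is nonzero only for $x\in\mathfrak g_{-\alpha}$, so $\Phi\bigl(\sum_{\alpha>0}e_{-\alpha}\otimes e_\alpha\bigr)$ is the projection onto $\mathfrak n_-$, not onto $\mathfrak n_+$. Hence the $0$-eigenspace of $S$ (on the root-space part) is $\mathfrak n_+$ and the $1$-eigenspace is $\mathfrak n_-$, exactly opposite to what you wrote. With the correct identification, the normalizer of the $0$-eigenspace $\mathfrak n_+\oplus V_0$ (with $V_0\subseteq\mathfrak h$) visibly contains $\mathfrak b_+$, and the statement of the theorem matches without the contortion you attempted about ``the normalizer of the opposite nilradical being the given Borel'' (which is false as stated: the normalizer of $\mathfrak b_-$ is $\mathfrak b_-$).

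The gap is the one you yourself flag. Showing that the normalizer of a $\lambda$-eigenspace $V_\lambda\subseteq\mathfrak h$ (for $\lambda\neq 0,1$) is exactly $\mathfrak h$ requires ruling out that some $e_{\pm\alpha}$ normalizes $V_\lambda$, which amounts to $\alpha(V_\lambda)\neq 0$; and showing that no $e_{-\alpha}$ (for $\alpha$ simple) normalizes the $0$-eigenspace requires $h_\alpha\notin V_0$. Both of these genuinely depend on the structure of $r_0$ via conditions (1) and (2), and ``a direct case analysis \dots\ showing general position'' is not yet an argument. This is where the actual content of the proof in \cite{SP} lies, and your outline does not supply it.
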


\begin{cor}\label{Omega+}
If $r_1, r_2$ are two non-skewsymmetric $r$-matrices, $r_1+r_1^{21}=\alpha \Omega$ and $r_2=r_1 + \beta\Omega$ then $\beta=-\alpha$.
\end{cor}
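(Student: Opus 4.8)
The plan is to transport the relation $r_2=r_1+\beta\Omega$ through the isomorphism $\Phi$ of Theorem~\ref{rstuff} and then compare the Jordan data on both sides. The first ingredient is the identity $\Phi(\Omega)=\mathrm{id}_{\mathfrak g}$: writing $\Omega=\sum_i t_i\otimes t^i$ for a pair of $K$-dual bases of $\mathfrak g$, the endomorphism $\Phi(\Omega)$ sends $y$ to $\sum_i K(t^i,y)\,t_i=y$. Since $\Phi$ is linear, $\Phi(r_2)=\Phi(r_1)+\beta\,\mathrm{id}_{\mathfrak g}$. Writing $S_i$ and $N_i$ for the semisimple and nilpotent parts of $\Phi(r_i)$, we conclude $N_2=N_1$ and $S_2=S_1+\beta\,\mathrm{id}_{\mathfrak g}$; thus $S_2$ has exactly the eigenspaces of $S_1$, with every eigenvalue shifted by $\beta$, and each eigenspace retains its normalizer.

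Next I would identify, for each $r_i$, the eigenvalues whose eigenspaces are normalized by a Borel subalgebra. Since $r_1$ is non-skewsymmetric we have $\alpha\neq0$, and $r_1/\alpha$ is again a non-skewsymmetric $r$-matrix with $r_1/\alpha+(r_1/\alpha)^{21}=\Omega$; by the Belavin--Drinfeld classification $r_1/\alpha$ is therefore in the normal form of that theorem for a suitable root decomposition, so Theorem~\ref{rstuff} tells us that the semisimple part of $\Phi(r_1/\alpha)$ has the eigenspaces for the eigenvalues $0$ and $1$ normalized by Borel subalgebras and every other eigenspace normalized by a Cartan. Since $S_1$ is $\alpha$ times this semisimple part, the eigenvalues of $S_1$ whose eigenspaces have a Borel normalizer are exactly $0$ and $\alpha$. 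The same reasoning applies to $r_2$ provided it is non-skewsymmetric in the needed sense, which it is: from $\Omega^{21}=\Omega$ we get $r_2+r_2^{21}=(\alpha+2\beta)\Omega$, and since $r_2$ is assumed non-skewsymmetric, $\alpha+2\beta\neq0$; hence the eigenvalues of $S_2$ with a Borel normalizer are exactly $0$ and $\alpha+2\beta$.

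Now I would compare the two descriptions. From the shift $S_2=S_1+\beta\,\mathrm{id}_{\mathfrak g}$, the eigenvalues of $S_2$ with a Borel normalizer are $\beta$ and $\alpha+\beta$; from the paragraph above they are $0$ and $\alpha+2\beta$. Hence $\{\beta,\alpha+\beta\}=\{0,\alpha+2\beta\}$ as sets, and because $\alpha\neq0$ and $\alpha+2\beta\neq0$ both are genuine two-element sets. The only possible matchings are $\beta=0$, which forces $r_2=r_1$, and $\beta=-\alpha$; since $r_1$ and $r_2$ are distinct, $\beta=-\alpha$.

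The point requiring care — the \emph{main obstacle} — is the justification for applying Theorem~\ref{rstuff} to $r_1$ and $r_2$: one must observe that multiplying an $r$-matrix by a nonzero scalar leaves the vanishing pattern of the nilpotent part unchanged and only rescales the eigenvalues of the semisimple part of $\Phi(r)$ (so it does not affect which eigenspaces have a Borel normalizer), and that the Belavin--Drinfeld classification does present every non-skewsymmetric $r$-matrix with $r+r^{21}=\Omega$ in the normal form covered by Theorem~\ref{rstuff} for an appropriate polarization. Once this is in place, the rest is bookkeeping with $\Phi$ and the Jordan decomposition.
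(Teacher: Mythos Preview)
Your proof is correct and follows the same approach as the paper: both transport through $\Phi$, use $\Phi(\Omega)=\mathrm{id}$, and compare the Borel-normalized eigenvalue pair $\{0,\alpha\}$ for $S_1$ (shifted by $\beta$) with the pair $\{0,\alpha+2\beta\}$ for $S_2$. Your version is simply more detailed---justifying the applicability of Theorem~\ref{rstuff} via rescaling and explicitly separating the trivial case $\beta=0$---whereas the paper compresses all of this into ``the result follows.''
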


\begin{proof}
From Theorem \ref{rstuff} we know that normalizers of $0$ and $\alpha$ eigenspaces of $\Phi(r_1)$ are ``big''. Therefore, they must be mapped to the $0$ and $\alpha+2\beta$ eigenspaces of $\Phi(r_2)$ (we have $\Phi(\Omega)=\mathrm{id}$ and $r_2+r_2^{21}=(\alpha+2\beta)\Omega$). On the other hand $\Phi(r_2)=\Phi(r_1)+\beta \mathrm{id}$. The result follows.	
\end{proof}

We will frequently work with the following objects.

\begin{defn}
Let $(\Gamma_1, \Gamma_2, \tau)$ be an admissible triple. Then sets of the form
$\alpha, \tau(\alpha), ..., \tau^{k}(\alpha)$, where $\alpha \in \Gamma_1 \setminus \Gamma_2$ and $\tau^{k}(\alpha) \in \Gamma_2\setminus \Gamma_1$ will be called strings of $\tau$.
\end{defn}

\begin{defn}
The \emph{centralizer} $C(r)$ of an $r$-matrix $r$ is the set of all $X \in G(\overline{F})$ such that $\mathrm{Ad}_Xr=r$.
\end{defn}

The centralizer of a matrix from the Belavin-Drinfeld list can be explicitly described.

\begin{thm} [\cite{SP}] \label{CenterinH}
  For any simple Lie algebra $\mathfrak{g}$ and for any Belavin-Drinfeld matrix $r_{BD}$ we have $C(r_{BD}) \subset H$. If $(\Gamma_1, \Gamma_2, \tau)$ is an admissible triple corresponding to  $r_{BD}$, then $X \in C(r_{BD})$ iff for any root $\alpha \in \Gamma_1 \setminus \Gamma_2$ and for any $k \in \mathbb{N}$ we have $e^{\alpha}(X)=e^{\tau^k(\alpha)}(X)$, i.e. $e^{\alpha}(X)$ is constant on the strings of $\tau$.
\end{thm}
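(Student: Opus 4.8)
The plan is to use Theorem~\ref{rstuff} to force any element of $C(r_{BD})$ into $H$, and then to compute the centralizing condition directly inside $H$. For the first step, note that since the Killing form $K$ is $\mathrm{Ad}$-invariant, for $X\in G(\overline{F})$ one has $\Phi(\mathrm{Ad}_X r)=\mathrm{Ad}_X\circ\Phi(r)\circ\mathrm{Ad}_X^{-1}$, where on the right $\mathrm{Ad}_X$ acts on $\mathrm{End}(\mathfrak{g})$ by conjugation; as $\Phi$ is an isomorphism, $X\in C(r_{BD})$ if and only if $\mathrm{Ad}_X$ commutes with $\Phi(r_{BD})$. If it does, then it commutes with the semisimple part $S$ of $\Phi(r_{BD})$ (by uniqueness of the Jordan decomposition, since $\mathrm{Ad}_X S\,\mathrm{Ad}_X^{-1}$ is semisimple, $\mathrm{Ad}_X N\,\mathrm{Ad}_X^{-1}$ is nilpotent, they commute, and their sum is $\Phi(r_{BD})$), hence $\mathrm{Ad}_X$ preserves every eigenspace of $S$ and therefore also its normalizer in $\mathfrak{g}$. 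By Theorem~\ref{rstuff} this forces $\mathrm{Ad}_X\mathfrak{b}_+=\mathfrak{b}_+$ and $\mathrm{Ad}_X\mathfrak{b}_-=\mathfrak{b}_-$. Since $G$ is connected, the $G$-stabilizer of a Borel subalgebra is the corresponding Borel subgroup, so $X\in B_+\cap B_-=H$, using $\mathfrak{b}_+\cap\mathfrak{b}_-=\mathfrak{h}$. This proves $C(r_{BD})\subset H$.

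Next I would take $X\in H$ and evaluate $\mathrm{Ad}_X r_{BD}$ term by term. Here $\mathrm{Ad}_X$ fixes $\mathfrak{h}\otimes\mathfrak{h}$ pointwise, so $\mathrm{Ad}_X r_0=r_0$; moreover $\mathrm{Ad}_X(e_{-\alpha}\otimes e_{\alpha})=e^{-\alpha}(X)e^{\alpha}(X)\,e_{-\alpha}\otimes e_{\alpha}=e_{-\alpha}\otimes e_{\alpha}$ since $e^{-\alpha}(X)=e^{\alpha}(X)^{-1}$, so the first two summands of $r_{BD}$ are fixed by every $X\in H$; and $\mathrm{Ad}_X(e_{-\alpha}\wedge e_{\tau^k(\alpha)})=\bigl(e^{\tau^k(\alpha)}(X)/e^{\alpha}(X)\bigr)\,e_{-\alpha}\wedge e_{\tau^k(\alpha)}$. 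The rank-one root-vector tensors appearing across all three summands of $r_{BD}$ are pairwise distinct (this uses that $\tau$ has no cycles, which is part of admissibility) and hence linearly independent, so $\mathrm{Ad}_X r_{BD}=r_{BD}$ if and only if $e^{\tau^k(\alpha)}(X)=e^{\alpha}(X)$ for every $\alpha$ and $k$ with $e_{-\alpha}\wedge e_{\tau^k(\alpha)}$ occurring in the sum. Taking $\alpha\in\Gamma_1$, $k=1$, this forces $e^{\tau(\beta)}(X)=e^{\beta}(X)$ for all $\beta\in\Gamma_1$; conversely, since $\tau$ is linear on $\mathrm{Span}(\Gamma_1)$ and $X\mapsto e^{(\cdot)}(X)$ is multiplicative, that already implies $e^{\tau^k(\alpha)}(X)=e^{\alpha}(X)$ for all the relevant $\alpha,k$. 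Finally, because $\tau$ is injective without cycles, iterating it from the sources in $\Gamma_1\setminus\Gamma_2$ reaches every element of $\Gamma_1$, so ``$e^{(\cdot)}(X)$ is $\tau$-invariant on $\Gamma_1$'' is precisely ``$e^{(\cdot)}(X)$ is constant on each string of $\tau$'', i.e. $e^{\alpha}(X)=e^{\tau^k(\alpha)}(X)$ for $\alpha\in\Gamma_1\setminus\Gamma_2$ and all $k$.

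I expect the single delicate step to be the passage, in the first paragraph, from ``$\mathrm{Ad}_X$ preserves the eigenspaces of $S$'' to ``$X\in H$''. This rests on the structural content of Theorem~\ref{rstuff} (that precisely the $0$- and $1$-eigenspaces of $S$ have Borel normalizers, all others only $\mathfrak{h}$) combined with the standard algebraic-group facts that a Borel subalgebra is self-normalizing and that two opposite Borels meet in their common maximal torus. The direct computation inside $H$ and the reformulation in terms of strings are then routine bookkeeping, once the absence of cycles for $\tau$ is invoked.
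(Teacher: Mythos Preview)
The paper does not include a proof of this theorem: it is quoted from \cite{SP} and used as a black box, so there is no in-text argument to compare against. Your proof is correct and is almost certainly the intended one, since Theorem~\ref{rstuff} (also imported from \cite{SP}) is set up precisely to supply the step ``$\mathrm{Ad}_X$ preserves both Borels, hence $X\in B_+\cap B_-=H$'', and the remaining weight computation inside $H$ is forced. The only places worth tightening are already the ones you flagged: the passage from ``$\mathrm{Ad}_X$ commutes with $\Phi(r_{BD})$'' to ``$\mathrm{Ad}_X$ commutes with $S$'' is exactly uniqueness of Jordan decomposition, and the linear independence claim for the root-vector terms follows cleanly from the $\mathfrak{h}\oplus\mathfrak{h}$-weight decomposition of $\mathfrak{g}\otimes\mathfrak{g}$ together with $\tau^k(\alpha)\neq\alpha$ for $k\geq 1$.
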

 
\begin{thm}
Let $r$ be a non-skewsymmetric $r$-matrix that defines a Lie bialgebra structure on $\mathfrak{g}(F)$, $r+r^{21}=a\Omega$. Then $a^2 \in F$.
\end{thm}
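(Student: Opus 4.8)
The plan is a Galois-descent argument resting on Corollary~\ref{Omega+}. I read the hypothesis ``$r$ defines a Lie bialgebra structure on $\mathfrak{g}(F)$'' as: $r\in\mathfrak{g}(\overline{F})\otimes\mathfrak{g}(\overline{F})$ and the cobracket $\delta_r(x)=[x\otimes 1+1\otimes x,r]$ maps $\mathfrak{g}(F)$ into $\mathfrak{g}(F)\wedge\mathfrak{g}(F)$ (this is the interesting case; if $r$ itself is $F$-rational the conclusion is immediate, and in fact $a\in F$). Note $\delta_r$ does land in $\wedge^2$ because the symmetric part of $r$ equals $\tfrac{a}{2}\Omega$, which is $\mathfrak{g}$-invariant.

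The first step is to compare $r$ with its Galois conjugates. Fix $\sigma\in\Gal(\overline{F}/F)$, with $\Gal$ acting on $\mathfrak{g}(\overline{F})^{\otimes 2}$ coefficientwise relative to an $F$-basis of $\mathfrak{g}(F)$, so that it commutes with the flip and fixes $\Omega$ (indeed $\Omega\in\mathfrak{g}(F)^{\otimes 2}$, the Killing form being $F$-rational). For $x\in\mathfrak{g}(F)$ one computes $\delta_{\sigma(r)}(x)=[x\otimes 1+1\otimes x,\sigma(r)]=\sigma\bigl([x\otimes 1+1\otimes x,r]\bigr)=\sigma(\delta_r(x))=\delta_r(x)$, the last equality because $\delta_r(x)\in\mathfrak{g}(F)\wedge\mathfrak{g}(F)$. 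Since $\delta_{\sigma(r)}$ and $\delta_r$ are $\overline{F}$-linear and agree on $\mathfrak{g}(F)$, they coincide. Now $\delta_{r_1}=\delta_{r_2}$ iff $r_1-r_2\in(\mathfrak{g}\otimes\mathfrak{g})^{\mathfrak{g}}$, and this invariant space is one-dimensional, spanned by $\Omega$, by Schur's lemma (the adjoint representation of the simple Lie algebra $\mathfrak{g}$ being irreducible over $\overline{F}$); moreover $\Omega$ is symmetric. Hence $\sigma(r)=r+c_\sigma\Omega$ for a scalar $c_\sigma\in\overline{F}$. Applying $u\mapsto u+u^{21}$ and using $\Omega^{21}=\Omega$, $\sigma(\Omega)=\Omega$ gives $\sigma(a)\,\Omega=\sigma(r)+\sigma(r)^{21}=(a+2c_\sigma)\Omega$, i.e.\ $\sigma(a)=a+2c_\sigma$.

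The second step applies Corollary~\ref{Omega+}. Since $a\neq 0$ (non-skewsymmetry of $r$) and $\sigma$ is invertible, $\sigma(r)$ is again a non-skewsymmetric $r$-matrix, with $\sigma(r)+\sigma(r)^{21}=\sigma(a)\Omega$ and $\sigma(r)=r+c_\sigma\Omega$. If $c_\sigma\neq 0$, Corollary~\ref{Omega+} (with $r_1=r$, $r_2=\sigma(r)$) forces $c_\sigma=-a$; so in every case $c_\sigma\in\{0,-a\}$, whence $\sigma(a)=a+2c_\sigma\in\{a,-a\}$ and therefore $\sigma(a^2)=\sigma(a)^2=a^2$. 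As $\sigma$ ranges over $\Gal(\overline{F}/F)$, this exhibits $a^2$ as a Galois-fixed element, so $a^2\in F$.

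The main obstacle is the first step: establishing that $\sigma(r)-r$ is a scalar multiple of $\Omega$. This needs the correct cobracket-level reading of ``defined over $F$'' together with the one-dimensionality of $(\mathfrak{g}\otimes\mathfrak{g})^{\mathfrak{g}}$. Once that is in place, everything else is bookkeeping built on Corollary~\ref{Omega+}, which already packages the rigidity of the eigenspace structure from Theorem~\ref{rstuff}.
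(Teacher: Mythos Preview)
Your proof is correct and follows essentially the same approach as the paper's: both deduce $\sigma(r)=r+c_\sigma\Omega$ from the $F$-rationality of the cobracket $\delta_r$, then invoke Corollary~\ref{Omega+} to force $c_\sigma\in\{0,-a\}$, whence $\sigma(a)=\pm a$ and $a^2\in F$. Your version is a bit more explicit than the paper's in justifying $(\mathfrak{g}\otimes\mathfrak{g})^{\mathfrak{g}}=\overline{F}\,\Omega$ via Schur's lemma and in tracking the sign when applying Corollary~\ref{Omega+}, but the argument is the same.
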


\begin{proof}
Recall that a Lie bialgebra structure $\delta$ corresponding to $r$ is given by the formula $\delta(a)=[r, a \otimes 1+ 1 \otimes a]$. Fix an arbitrary $\sigma \in \Gal(\overline{F}/F)$. Consider the $r$-matrix $\sigma(r)$.  As $r$ defines a Lie bialgebra structure on $\mathfrak{g}(F)$ we have $\delta(a)=\sigma(\delta(a))$ for all $a \in \mathfrak{g}(F)$. Therefore $\sigma(r)=r+\alpha \Omega$. From Corollary \ref{Omega+} we get $\alpha=0$ or $\alpha=2a$. On the other hand $\sigma(a)\Omega=\sigma(r+r^{21})=\sigma(r)+\sigma(r^{21})=r+r^{21}+2\alpha\Omega=(a+2\alpha) \Omega$. Therefore $\sigma(a)=\pm a$. Thus $\sigma(a^2)=a^2$ for all $\sigma$.
\end{proof}

\subsection{Matrix lemmas}
\begin{lem}\label{blocks}
If for some $X \in GL(n+m, \overline{F})$ and for any $\sigma \in \Gal(\overline{F}/ F)$ we have $X^{-1}\sigma(X)$ is a $n \times n, m \times m$ block matrix, then there exists $Q \in GL(n+m, F)$ and a block matrix $K$ such that $X=QK$.
\end{lem}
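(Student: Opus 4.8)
The plan is to read this as a Galois descent statement and apply Hilbert's Theorem~90. Write $N=n+m$, and let $L\subset GL(N)$ be the (closed, $F$-defined) subgroup of block-diagonal matrices with diagonal blocks of sizes $n$ and $m$, so that $L(\overline F)\cong GL(n,\overline F)\times GL(m,\overline F)$. The hypothesis says precisely that $c_\sigma:=X^{-1}\sigma(X)$ lies in $L(\overline F)$ for every $\sigma\in\Gal(\overline F/F)$.

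First I would check that $\sigma\mapsto c_\sigma$ is a (non-abelian) $1$-cocycle of $\Gal(\overline F/F)$ with values in $GL(N,\overline F)$: since $\sigma(X^{-1}\tau(X))=\sigma(X)^{-1}(\sigma\tau)(X)$, we get $c_\sigma\,\sigma(c_\tau)=X^{-1}\sigma(X)\,\sigma(X)^{-1}(\sigma\tau)(X)=X^{-1}(\sigma\tau)(X)=c_{\sigma\tau}$. Invertibility of $X$ and $\sigma(X)$ forces $c_\sigma\in GL(N,\overline F)$, and by hypothesis $c_\sigma$ is block-diagonal, so in fact $c\in Z^1\big(\Gal(\overline F/F),L(\overline F)\big)$. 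To sidestep any continuity bookkeeping, note that the finitely many entries of $X$ generate a finite extension of $F$; enlarging it to its Galois closure $E/F$, the map $\sigma\mapsto c_\sigma$ factors through the finite group $\Gal(E/F)$, and we may work with a cocycle in $Z^1(\Gal(E/F),L(E))$.

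The main input is the generalized Hilbert Theorem~90: $H^1(\Gal(E/F),GL(k,E))=1$ for every $k$. Since $L=GL(n)\times GL(m)$, this gives $H^1(\Gal(E/F),L(E))=H^1(\Gal(E/F),GL(n,E))\times H^1(\Gal(E/F),GL(m,E))=1$. Hence the cocycle $c$ is a coboundary \emph{inside} $L$: there is a block-diagonal matrix $K\in L(E)$ with $c_\sigma=K^{-1}\sigma(K)$ for all $\sigma\in\Gal(E/F)$. Now put $Q:=XK^{-1}$. For every $\sigma$ we have $\sigma(Q)=\sigma(X)\sigma(K)^{-1}=(Xc_\sigma)(Kc_\sigma)^{-1}=Xc_\sigma c_\sigma^{-1}K^{-1}=XK^{-1}=Q$, so $Q$ is $\Gal(E/F)$-invariant and therefore lies in $GL(N,F)$; and $X=QK$ with $K$ block-diagonal, as required.

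The only substantive ingredient is Hilbert~90 for $GL(k)$; the rest is formal manipulation of cocycles. The step that most deserves care is the passage to a finite extension (equivalently, the continuity of $c$) and the observation that "block matrix" here means block-\emph{diagonal}, so that $c$ really takes values in the subgroup $L$ and not merely in the larger block-triangular subgroup — it is exactly this that lets us choose the coboundary witness $K$ to be block-diagonal.
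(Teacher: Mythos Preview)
Your proof is correct and takes a genuinely different route from the paper's. The paper argues directly and elementarily: after a row permutation (over $F$) it assumes the diagonal blocks $A$, $D$ of $X$ are invertible via the Laplace expansion of $\det X$, then observes that $\sigma(A)=AK_1$ and $\sigma(C)=CK_1$ force $CA^{-1}\in M_{m\times n}(F)$ (and similarly $BD^{-1}\in M_{n\times m}(F)$), yielding the explicit factorization
\[
X=\begin{pmatrix} I & BD^{-1}\\ CA^{-1} & I\end{pmatrix}\begin{pmatrix} A & 0\\ 0 & D\end{pmatrix}.
\]
Your argument instead packages the hypothesis as a $1$-cocycle valued in $L=GL(n)\times GL(m)$ and kills it with Hilbert~90 applied to each factor. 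What the paper's approach buys is a self-contained, constructive proof with an explicit $K$ and $Q$, requiring no cohomological machinery. What your approach buys is a clean conceptual explanation and immediate generality: the same argument works verbatim for any $F$-defined Levi subgroup $L\subset GL(N)$ (hence for any block partition into more than two pieces, giving the diagonal corollary in one stroke), and indeed for any $F$-subgroup with trivial $H^1$.
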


\begin{proof}
Let  $X= \left( \begin{array}{cc}
A & B \\
C & D  \\
\end{array} \right)$. Rearranging rows if necessary, we can assume that $A$ and $D$ are non-degenerate (because of Laplace formula).  Then $\sigma(X)=XK$, where $K=\left( \begin{array}{cc}
K_1 & 0 \\
0 & K_2  \\
\end{array} \right) \in GL(n+m, \overline{F})$. Then we have $\sigma(A)=A K_1$ and $\sigma(C)=CK_1$. Since $CA^{-1}$ is $\Gal(\overline{F}/ F)$-stable we have $C=F_1 A$, $F_1 \in GL(m \times n, F)$. Similarly $B=F_2 D$, $F_2 \in GL(n \times m, F)$.  Finally $\left( \begin{array}{cc}
A & B \\
C & D  \\
\end{array} \right) =
\left( \begin{array}{cc}
I & F_2 \\
F_1 & I  \\
\end{array} \right)
\left( \begin{array}{cc}
 A& 0 \\
0 & D  \\
\end{array} \right)$.
\end{proof}

\begin{cor}\label{diagblocks}
If for some $X \in GL(n, F)$ and for any $\sigma \in \Gal(\overline{F}/ F)$ we have $X^{-1}\sigma(X)$ is diagonal, then $X=QD$ where $D$ is diagonal and $Q \in GL(n, F)$.\hfill $\Box$
\end{cor}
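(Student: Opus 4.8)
The plan is to deduce Corollary~\ref{diagblocks} from Lemma~\ref{blocks} by induction on $n$, exploiting the elementary remark that a diagonal matrix is, for any splitting $n = k + (n-k)$, in particular a $k \times k$, $(n-k)\times(n-k)$ block matrix. So the single nontrivial input is the lemma just proved, applied repeatedly.

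The base case $n=1$ is vacuous. For the inductive step I would split $n = 1 + (n-1)$. By hypothesis $X^{-1}\sigma(X)$ is diagonal, hence in particular a $1 \times 1$, $(n-1)\times(n-1)$ block matrix, for every $\sigma \in \Gal(\overline{F}/F)$; Lemma~\ref{blocks} then gives $X = QK$ with $Q \in GL(n,F)$ and $K = \diag(k, K')$, where $k \in \overline{F}^{*}$ and $K' \in GL(n-1,\overline{F})$. Since $Q$ is defined over $F$ one computes $X^{-1}\sigma(X) = K^{-1}\sigma(K) = \diag\bigl(k^{-1}\sigma(k),\,(K')^{-1}\sigma(K')\bigr)$, so the diagonality of the left-hand side forces $(K')^{-1}\sigma(K')$ to be diagonal for all $\sigma$. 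The induction hypothesis applied to $K'$ gives $K' = Q'D'$ with $Q' \in GL(n-1,F)$ and $D'$ diagonal, whence $K = \diag(1,Q')\cdot\diag(k,D')$ and $X = \bigl(Q\,\diag(1,Q')\bigr)\,\diag(k,D')$ is the desired factorization $X = QD$.

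I expect no real obstacle here; the proof is essentially bookkeeping. The two points that warrant a line of justification are: (i) the identity $X^{-1}\sigma(X) = K^{-1}\sigma(K)$, which holds precisely because the conjugating matrix furnished by Lemma~\ref{blocks} has entries in $F$ — this is what lets the hypothesis descend to the lower-right block; and (ii) the fact that Lemma~\ref{blocks} may require a preliminary row permutation to make its diagonal blocks invertible, but such a permutation is defined over $F$ and leaves $X^{-1}\sigma(X)$ unchanged. As an alternative to the induction, one can finish in one stroke: the hypothesis says that each column $v_i$ of $X$ spans a $\Gal(\overline{F}/F)$-stable line in $\overline{F}^{\,n}$; normalizing $v_i$ so that one of its nonzero coordinates equals $1$ shows $\sigma(v_i) = v_i$, so $v_i \in F^{\,n}$, and recording the normalizing scalars in a diagonal matrix $D$ yields $X = QD$ with $Q \in GL(n,F)$.
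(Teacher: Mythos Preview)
Your proposal is correct. The paper gives no separate argument for the corollary (it is marked only with $\Box$), treating it as an immediate consequence of Lemma~\ref{blocks}; your inductive spelling-out via the splitting $n = 1 + (n-1)$ is exactly the intended reduction. Your alternative column-by-column argument---normalizing each column along a Galois-stable line---is even more self-contained and does not invoke Lemma~\ref{blocks} at all, which is a nice bonus.
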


\section{Overview of the paper}
Our main goal is to classify Lie bialgebra structures on $\mathfrak{g}(F)$.  We will mostly be interested in the case $F=\mathbb{C}((\hslash))$; however, many general results hold for an arbitrary field of characteristic zero. Because any Lie bialgebra structure on $\mathfrak{g}(F)$ extends to a Lie bialgebra structure on $\mathfrak{g}(\overline{F})$, we have

\begin{thm}
Any Lie bialgebra structure on $\mathfrak{g}(F)$ is a coboundary one given by an $r$-matrix. If $r$ is not skewsymmetric then it has the form $r=\alpha \mathrm{Ad}_X r_{BD}$ where $\alpha \in \overline{F}^*$, $X \in G(\overline{F})$, $r_{BD}$ is an $r$-matrix from the Belavin-Drinfeld list.
\end{thm}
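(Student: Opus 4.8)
The plan is to deduce everything from the Belavin--Drinfeld classification over the algebraically closed field $\overline{F}$. A Lie bialgebra structure $\delta$ on $\mathfrak{g}(F)$ extends by $\overline{F}$-linearity to a Lie bialgebra structure $\delta_{\overline{F}}$ on $\mathfrak{g}(\overline{F})=\mathfrak{g}(F)\otimes_F\overline{F}$ (the defining identities of a Lie bialgebra are polynomial and survive the base change). Since $\mathfrak{g}(\overline{F})$ is simple over an algebraically closed field of characteristic zero, the Belavin--Drinfeld theorem gives $\delta_{\overline{F}}=\partial r$ for some $r\in\mathfrak{g}(\overline{F})\otimes\mathfrak{g}(\overline{F})$ solving the classical Yang--Baxter equation with $r+r^{21}$ invariant; as $\partial r$ preserves the $F$-subspace $\mathfrak{g}(F)\otimes\mathfrak{g}(F)$ (there it equals $\delta$), this already exhibits the structure on $\mathfrak{g}(F)$ as a coboundary one. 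If $r$ is not skew-symmetric then $r+r^{21}=\alpha\Omega$ with $\alpha\in\overline{F}^*$, and by the explicit part of Belavin--Drinfeld the normalized matrix $\alpha^{-1}r$ has the displayed form $r_0+\sum_{\beta>0}e_{-\beta}\otimes e_\beta+\sum e_{-\beta}\wedge e_{\tau^k(\beta)}$ with respect to \emph{some} Cartan subalgebra, positive system, root vectors, and admissible triple.

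It then remains to transport this to the fixed root datum of the paper. Any two pairs (Cartan subalgebra, Borel subalgebra) in $\mathfrak{g}(\overline{F})$ are $G(\overline{F})$-conjugate --- Borel subgroups are conjugate, and the Cartan subalgebras inside a fixed Borel form one orbit under its unipotent radical --- so there is $Y\in G(\overline{F})$ taking the Cartan/Borel/root-vector data underlying $\alpha^{-1}r$ to the standard data used to write down the Belavin--Drinfeld list (rescaling root vectors by a torus element if needed, which leaves the summands $e_{-\beta}\otimes e_\beta$ unchanged and is absorbed by letting the list range over all admissible triples). Then $\mathrm{Ad}_Y(\alpha^{-1}r)=r_{BD}$ for some $r_{BD}$ on the list, so $r=\alpha\,\mathrm{Ad}_{Y^{-1}}r_{BD}$, which is the asserted form with $X=Y^{-1}$. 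This bookkeeping is the one place requiring a little care (matching the normalization of the root vectors), but it is routine.

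Since the two substantive inputs --- that every Lie bialgebra structure on a simple Lie algebra over an algebraically closed field is a coboundary, and the normal form of a non-skew $r$-matrix --- are quoted from Belavin--Drinfeld, there is no genuine obstacle inside this argument. The subtle point, and the reason the statement is phrased with $\alpha\in\overline{F}^*$ and $X\in G(\overline{F})$, is that $r=\alpha\,\mathrm{Ad}_X r_{BD}$ need not be $F$-rational even though $\partial r=\delta$ is. If one wants an $F$-rational $r$-matrix, one uses $(\mathfrak{g}\otimes\mathfrak{g})^{\mathfrak{g}}=\overline{F}\cdot\Omega$ for $\mathfrak{g}$ simple (via the Killing form): for $\sigma\in\Gal(\overline{F}/F)$ one has $\sigma(r)-r=c_\sigma\Omega$, the map $\sigma\mapsto c_\sigma$ is an additive $1$-cocycle, hence a coboundary by the additive Hilbert~90, and subtracting the corresponding multiple of $\Omega$ gives a Galois-invariant $r$-matrix inducing $\delta$.

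That corrected matrix, however, generally no longer has the exact shape $\alpha\,\mathrm{Ad}_X r_{BD}$, so the information that really persists over $F$ is the pair $(\alpha,X)$ modulo the equivalence under which $\alpha\,\mathrm{Ad}_X r_{BD}$ defines a structure defined over $F$. Thus I expect the difficulty to lie not in this theorem but in the analysis of that equivalence --- the Belavin--Drinfeld cohomology --- which the remainder of the paper develops.
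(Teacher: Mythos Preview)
Your argument is correct and follows exactly the approach the paper indicates: the paper gives no detailed proof here, only the single sentence ``Because any Lie bialgebra structure on $\mathfrak{g}(F)$ extends to a Lie bialgebra structure on $\mathfrak{g}(\overline{F})$, we have\ldots'', leaving the reader to invoke the Belavin--Drinfeld classification over $\overline{F}$ and conjugate back to the fixed root datum --- precisely what you spell out. Your last two paragraphs (the Hilbert~90 argument producing an $F$-rational $r$) go beyond what the theorem asserts and are not needed for it, though the observation is correct and foreshadows why the subsequent cohomological analysis is the real content.
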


However, not all $r$-matrices of the form $r=\alpha \mathrm{Ad}_X r_{BD}$ define a Lie bialgebra structure on $\mathfrak{g}(F)$.

In what follows we will be interested only in non-skewsymmetric $r$-matrices.
We will classify all (non-skewsymmetic) $r$-matrices that do define a Lie bialgebra structure on $\mathfrak{g}(F)$ up to the following equivalence

\begin{defn}
Two $r$-matrices $r_1$ and $r_2$ are called equivalent if for some $a \in F^*$ and $X \in G(F)$ we have $r_1=a\mathrm{Ad}_X r_2$.
\end{defn}

By the Belavin-Drinfeld classification every $r$-matrix is equivalent to $r_{BD}$ over $\overline{F}$, and $r_{BD}$'s for different triples $(\Gamma_1, \Gamma_2, \tau)$ are not equivalent. However, over $F$ each $\overline{F}$-equivalence class can split into several equivalence classes.
It turns out that if $a \mathrm{Ad}_X r_{BD}$ defines a Lie bialgebra structure on $\mathfrak{g}(F)$ then $a^2 \in F$. Therefore %up to the equivalence
we have a class of $r$-matrices for each $a^2 \in F^* / (F^*)^2 $. Elements from different classes are nor equivalent, but each class can further split into several equivalence classes. For $F=\mathbb{C}((\hslash))$ we have only two elements in   $F^* / (F^*)^2 $: the class of $1$ and the class of $\sqrt{\hslash}$. For every element $u \in F^* / (F^*)^2 $ we will introduce the Belavin-Drinfeld cohomologies that parametrize equivalence classes of $r$-matrices of the form $a \mathrm{Ad}_X r_{BD}$, where $a^2=u$ in $F^* / (F^*)^2$. The cohomologies are called twisted if $u \neq 1$ and non-twisted otherwise.

\subsection{Non-twisted cohomologies}
The following theorem holds:

\begin{thm}[\cite{SP}]
$Ad_X r_{BD}$ defines a Lie bialgebra structure on $\mathfrak{g}(F)$ if and only if for any $\sigma \in \Gal(\overline{F}/F)$ we have $X^{-1}\sigma(X) \in C(r)$.
\end{thm}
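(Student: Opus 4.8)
\emph{Proof idea.} The plan is to reduce the biconditional to a single, cleaner statement: that the tensor $r:=\mathrm{Ad}_X r_{BD}$ is defined over $F$, i.e.\ that $r\in\mathfrak{g}(F)\otimes\mathfrak{g}(F)$. First I would record that this is just a reformulation of the Galois condition on $X$. Taking $r_{BD}$ to be defined over $F$ --- which is legitimate, since the Chevalley root vectors are rational and conditions (1), (2) cut out the admissible $r_0\in\mathfrak{h}\otimes\mathfrak{h}$ by linear equations with rational coefficients --- Galois descent shows that $r\in\mathfrak{g}(F)^{\otimes2}$ if and only if $\sigma(r)=r$ for all $\sigma$; unwinding $\sigma(\mathrm{Ad}_Xr_{BD})=\mathrm{Ad}_{\sigma(X)}r_{BD}$, this is in turn equivalent to $\mathrm{Ad}_{X^{-1}\sigma(X)}r_{BD}=r_{BD}$ for all $\sigma$, i.e.\ to $X^{-1}\sigma(X)\in C(r_{BD})$ for all $\sigma$. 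Hence it remains to prove that $\mathrm{Ad}_Xr_{BD}$ defines a Lie bialgebra structure on $\mathfrak{g}(F)$ if and only if $r\in\mathfrak{g}(F)^{\otimes2}$.

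The ``if'' direction is routine, and I would dispatch it by checking that the cobracket $\delta(a)=[r,a\otimes1+1\otimes a]$ inherits every Lie bialgebra axiom from the picture over $\overline{F}$. It is $F$-linear and maps $\mathfrak{g}(F)$ into $\mathfrak{g}(F)^{\otimes2}$ because $r$ has $F$-rational entries; it takes values in $\Lambda^2$ since $\delta(a)+\delta(a)^{21}=[r+r^{21},a\otimes1+1\otimes a]=[\Omega,a\otimes1+1\otimes a]=0$ by $\mathrm{ad}$-invariance of $\Omega=r+r^{21}$; the cocycle identity is automatic for a coboundary; and co-Jacobi holds because $[[r,r]]=\mathrm{Ad}_X^{\otimes3}[[r_{BD},r_{BD}]]=0$.

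The substance is in the ``only if'' direction, and this is where I expect the single genuinely non-formal step. Assuming $\delta(a)\in\mathfrak{g}(F)^{\otimes2}$ for every $a\in\mathfrak{g}(F)$, we get $\sigma(\delta(a))=\delta(a)$; since $\sigma(\delta(a))=[\sigma(r),a\otimes1+1\otimes a]$ for $a\in\mathfrak{g}(F)$, this gives $[\sigma(r)-r,\,a\otimes1+1\otimes a]=0$ for all $a\in\mathfrak{g}(F)$, hence for all $a\in\mathfrak{g}(\overline{F})$ by $\overline{F}$-linearity (a basis of $\mathfrak{g}(F)$ over $F$ is one of $\mathfrak{g}(\overline{F})$ over $\overline{F}$). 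Thus $\sigma(r)-r$ is an $\mathrm{ad}$-invariant element of $\mathfrak{g}\otimes\mathfrak{g}$; and here I would use that for $\mathfrak{g}$ simple the space $(\mathfrak{g}\otimes\mathfrak{g})^{\mathfrak{g}}$ is one-dimensional and spanned by $\Omega$ (Schur's lemma together with self-duality of the adjoint module), so that $\sigma(r)=r+c_\sigma\Omega$ for some $c_\sigma\in\overline{F}$. The subtlety is that $F$-rationality of $\delta$ only forces $r$ to be $F$-rational \emph{modulo $\Omega$}; to remove this ambiguity I would apply $\sigma$ to the unitarity relation $r+r^{21}=\Omega$ (note $\Omega$ is defined over $F$), obtaining $\Omega=\sigma(r)+\sigma(r)^{21}=(1+2c_\sigma)\Omega$, whence $c_\sigma=0$ and $\sigma(r)=r$. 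This last move is the same mechanism as in Corollary~\ref{Omega+}. Everything else is Galois descent together with the observation that a coboundary cobracket automatically satisfies the Lie bialgebra axioms as soon as its $r$-matrix solves the CYBE, which $\mathrm{Ad}_Xr_{BD}$ does because $r_{BD}$ does.
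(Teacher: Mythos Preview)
The paper does not supply its own proof of this theorem: it is quoted from \cite{SP} and used as input for the definition of Belavin--Drinfeld cocycles. So there is no in-paper argument to compare against directly.

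Your proof is correct. The reduction to ``$r=\mathrm{Ad}_X r_{BD}$ is $\Gal(\overline{F}/F)$-stable'' is the right reformulation, and your handling of the nontrivial direction---identifying $\sigma(r)-r$ as an $\mathrm{ad}$-invariant tensor, hence a multiple $c_\sigma\Omega$, and then killing $c_\sigma$ via the symmetric part $r+r^{21}=\Omega$---is clean and self-contained. Two small remarks. First, your closing sentence that ``this last move is the same mechanism as in Corollary~\ref{Omega+}'' undersells what you did: Corollary~\ref{Omega+} (which rests on the eigenspace analysis of Theorem~\ref{rstuff}) by itself would only give $c_\sigma\in\{0,-1\}$, whereas your symmetric-part computation pins down $c_\sigma=0$ immediately and with no structural input beyond $\Omega\in\mathfrak{g}(F)^{\otimes 2}$. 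This is exactly the simplification that the normalization $a=1$ (as opposed to the general $a$ in the twisted setting) affords. Second, your caveat that $r_{BD}$ be chosen defined over $F$ is necessary and correctly justified; without it the equivalence ``$\sigma(r)=r\iff X^{-1}\sigma(X)\in C(r_{BD})$'' breaks, since $\sigma(\mathrm{Ad}_X r_{BD})=\mathrm{Ad}_{\sigma(X)}\sigma(r_{BD})$ would pick up an extra $\sigma(r_{BD})\neq r_{BD}$.
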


With that theorem in mind we can define Belavin-Drinfeld cohomologies without referring to Lie bialgebra structures.

\begin{defn}
$X \in G(\overline{F})$ is called a non-twisted Belavin-Drinfeld cocycle for $r_{BD}$ if for any $\sigma \in \Gal(\overline{F}/F)$ we have $X^{-1}\sigma(X) \in C(r_{BD})$. The set of non-twisted cocycles will be denoted $Z(r_{BD})=Z(G, r_{BD})$.
\end{defn}

In other words $X \in Z(r_{BD})$ if $\mathrm{Ad}_X r_{BD}$ defines a Lie bialgebra structure on $\mathfrak{g}(F)$.

\begin{defn}
Two cocycles $X_1, X_2 \in Z(r_{BD})$ are called equivalent if there exist $Q \in G(F)$ and $C \in C(r_{BD})$ such that $X_1=QX_2C$.
\end{defn}

It is easy to see that $X_1$ and $X_2$ are equivalent if and only if $\mathrm{Ad}_{X_1} r_{BD}$ and  $\mathrm{Ad}_{X_2} r_{BD}$ are equivalent as $r$-matrices.

\begin{defn}
The set of equivalence classes of non-twisted cocycles is denoted $H_{BD}^1(r_{BD})=H_{BD}^1(G, r_{BD})$ and is called non-twisted Belavin-Drinfeld cohomologies.
\end{defn}

\begin{rem}
The equivalent way to treat Belavin-Drinfeld cohomology is to understand it as a Galois cohomology of the Galois module $C(r)$ (over the absolute Galois group of $F$). This justifies the use of the word ``cohomology''. Also this approach enables us to put a natural group structure on $H_{BD}^1(r)$. Indeed, for any $X \in Z(r)$ the map $\Gal(\overline{F}/F) \to C(r)$ given by $\sigma \mapsto X^{-1}\sigma(X)$ is a 1-cocycle. If such a map is equivalent to identity then there exists $C \in C(r)$ such that $(XC^{-1})^{-1}\sigma(XC^{-1})=1$ for all $\sigma$. Then one can find $Q \in G(F)$ such that $XC^{-1}=Q$. This means that our equivalence relation is the same as the equivalence of Galois cocycles. We also need to prove that any Galois cocycle is of the given form. We do not know how to prove this in general, however for the algebras of $B,C,D$ types we sketch a proof.

 First one observes that by Hilbert's theorem 90 it is always possible to find $Y \in GL(n, \overline{F})$ such that $Y^{-1}\sigma(Y)=X^{-1}\sigma(X) \in C(r)$ for all $\sigma \in \Gal(\overline{F}/F)$.  With the suitable matrix representation of the group $G$ we can assume that $C(r)$ is a subset of the set of diagonal matrices. Then by Lemma \ref{blocks} we can assume that $Y$ is diagonal. By looking at the equations that define $H$ inside the set of diagonal matrices we can decompose $Y$ as $Y=QY'$ with $Y' \in C(r)$ and $Q \in \diag(n, F)$. Then $Y'$ is in $C(r)$ and gives rise to the same cocycle as $Y$.
\end{rem}
Computation of $H_{BD}^1(G, r_{BD})$ is possible because of the easy description of $C(r_{BD})$ (see Theorem \ref{CenterinH}).

 \subsection{Twisted cohomologies}
Twisted case is somewhat similar to the non-twisted one, however computations become more complicated.
Fix a non-zero $a\in \overline{F}$ such that $a^2 \in F$. We will mostly work with $F=\mathbb{C}((\hslash))$ and $a=\sqrt{\hslash}$. We have the following theorem.
\begin{thm}
$a \mathrm{Ad}_X r_{BD}$ defines a Lie bialgebra structure on $\mathfrak{g}(F)$ if and only if $X$ is a non-twisted cocycle for the field $F[a]$ and $\mathrm{Ad}_{X^{-1}\sigma_0(X)}r_{BD}=r_{BD}^{21}$. Here $\sigma_0$ is the nontrivial element of $\Gal(F[a]/F)$.
 \end{thm}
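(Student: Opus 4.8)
The plan is to reduce the claim to the non-twisted criterion (the theorem stated just above) applied over the quadratic extension $F[a]$, together with one extra descent condition from $F[a]$ down to $F$. I would first note that $r:=a\,\mathrm{Ad}_X r_{BD}$ is automatically a solution of the CYBE over $\overline F$ with $r+r^{21}=a\Omega$, because the CYBE is homogeneous of degree two, $\mathrm{Ad}_X$ preserves it, and $\mathrm{Ad}_X\Omega=\Omega$; hence all Lie bialgebra axioms for the cobracket $\delta_r(y)=[r,\,y\otimes 1+1\otimes y]$ hold over $\overline F$, and the only thing that can fail is that $\delta_r$ need not be $\Gal(\overline F/F)$-equivariant. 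Since two $r$-matrices induce the same cobracket iff their difference is $\mathfrak g$-invariant, and $(\mathfrak g\otimes\mathfrak g)^{\mathfrak g}=\overline F\,\Omega$ for $\mathfrak g$ simple (Schur, via $\mathfrak g\otimes\mathfrak g\cong\mathrm{End}(\mathfrak g)$), the condition ``$r$ defines a Lie bialgebra structure on $\mathfrak g(F)$'' is equivalent to: $\sigma(r)-r\in\overline F\,\Omega$ for every $\sigma\in\Gal(\overline F/F)$.

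Next I would split $\Gal(\overline F/F)$ by its action on $a$. For $\sigma$ fixing $a$ (equivalently $\sigma\in\Gal(\overline F/F[a])$) one has $\sigma(r)=a\,\mathrm{Ad}_{\sigma(X)}r_{BD}$ since $r_{BD}$ is rational, so $\sigma(r)-r=a\,\mathrm{Ad}_X\!\bigl(R-r_{BD}\bigr)$ with $R:=\mathrm{Ad}_{X^{-1}\sigma(X)}r_{BD}$, itself a non-skewsymmetric $r$-matrix with $R+R^{21}=\Omega$; then $\sigma(r)-r\in\overline F\,\Omega$ reads $R=r_{BD}+c\Omega$, and applying the operation $(\cdot)+(\cdot)^{21}$ forces $c=0$ exactly as in Corollary \ref{Omega+}, i.e. $X^{-1}\sigma(X)\in C(r_{BD})$. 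Ranging over all such $\sigma$, this says precisely that $X$ is a non-twisted cocycle for $F[a]$. For $\sigma$ with $\sigma(a)=-a$ (the other coset, restricting to $\sigma_0$ on $F[a]$) the same computation gives $\sigma(r)-r=-a\,\mathrm{Ad}_X(R+r_{BD})$; requiring this to lie in $\overline F\,\Omega$ and again applying $(\cdot)+(\cdot)^{21}$ pins the scalar and yields $R=\Omega-r_{BD}=r_{BD}^{21}$, i.e. $\mathrm{Ad}_{X^{-1}\sigma(X)}r_{BD}=r_{BD}^{21}$. I would then check this last condition is independent of the lift $\sigma$ of $\sigma_0$: two lifts differ by some $\tau\in\Gal(\overline F/F[a])$, which multiplies $X^{-1}\sigma(X)$ on the right by an element of $\sigma(C(r_{BD}))=C(r_{BD})$ (using that $X^{-1}\tau(X)\in C(r_{BD})$ from the first part and that $C(r_{BD})\subset H$ is defined over the prime field), which does not affect $\mathrm{Ad}_{(\cdot)}r_{BD}$ — this is the meaning of the notation $\mathrm{Ad}_{X^{-1}\sigma_0(X)}r_{BD}=r_{BD}^{21}$. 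Assembling the two families of conditions gives the asserted equivalence.

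The one genuinely delicate point is the scalar chase with $\Omega$. Unlike in the non-twisted case, $r=a\,\mathrm{Ad}_X r_{BD}$ is \emph{never} $F$-rational when $a\notin F$, so the right criterion cannot be $\sigma(r)=r$ but only $\sigma(r)\equiv r\pmod\Omega$; the normalization $R+R^{21}=\Omega$ (equivalently Theorem \ref{rstuff} and Corollary \ref{Omega+}) is precisely what forces the a priori free constant to take the single value $-a$, converting ``$\equiv r_{BD}^{21}\pmod\Omega$'' into the clean equality $\mathrm{Ad}_{X^{-1}\sigma_0(X)}r_{BD}=r_{BD}^{21}$. Everything else is Galois bookkeeping and the degree-two homogeneity of the CYBE.
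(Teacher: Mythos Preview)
Your proof is correct. The paper, however, states this theorem in the overview (Section~2.2) without supplying a proof, so there is nothing in the text to compare against directly. The argument you give is the natural one and is entirely in the spirit of the paper's methods: the same Galois-descent idea (that $r$ defines a Lie bialgebra on $\mathfrak g(F)$ iff $\sigma(r)-r\in\overline F\,\Omega$ for every $\sigma$) and the same use of the symmetric part to pin down the free scalar already appear explicitly in the paper's proof that $a^2\in F$.

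One minor remark: in the case $\sigma(a)=a$ you cite Corollary~\ref{Omega+}, but what you actually use is the more elementary observation that $R+R^{21}=\Omega=r_{BD}+r_{BD}^{21}$ forces $c=0$; Corollary~\ref{Omega+} itself is about two genuine $r$-matrices and yields $\beta\in\{0,-\alpha\}$, which is more than you need here. Your check that the condition on the coset of $\sigma_0$ is lift-independent (using that $C(r_{BD})$ is cut out inside the split torus by character equations and is therefore Galois-stable) is exactly right and fills in the one point the statement leaves implicit.
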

To deal with the condition  $\mathrm{Ad}_{X^{-1}\sigma_0(X)}r_{BD}=r_{BD}^{21}$ we will classify all triples $(\Gamma_1, \Gamma_2, \tau)$ such that $r_{BD}^{21}$ and $r_{BD}$ are conjugate. In each case we will find suitable $S \in G(F)$ such that $r_{BD}^{21}=\mathrm{Ad}_S r_{BD}$. Then we can define Belavin-Drinfeld cocycles and cohomologies similar to the non-twisted case. In all cases $S^2=\pm 1$.
\begin{defn}
 $X \in G(\overline{F})$ is called the Belavin-Drinfeld twisted cocycle if for any $\sigma \in \Gal(\overline{F}/F[a])$ we have $X^{-1}\sigma(X) \in C(r_{BD})$ and $SX^{-1}\sigma_0(X) \in C(r_{BD})$. The set of Belavin-Drinfeld twisted cocycles is denoted $\overline{Z}(r_{BD})=\overline{Z}(G, r_{BD})$.
\end{defn}
\begin{defn}
 Two twisted cocycles $X_1, X_2$ are called equivalent if there exist $Q \in G(F)$ and $C \in C(r_{BD})$ such that $X_1=QX_2C$. The set of equivalence classes of twisted cocycles is called twisted Belavin-Drinfeld cohomologies and is denoted by $\overline{H}_{BD}^1(r_{BD})=\overline{H}_{BD}^1(G, r_{BD})$.
 \end{defn}
\begin{prop}
Two $r$-matrices $a\mathrm{Ad}_X r_{BD}$ and $a\mathrm{Ad}_Y r_{BD}$ are conjugate by an element of $G(F)$ if and only if $X$ and $Y$ are equivalent as twisted cocycles. Thus the set of equivalence classes of twisted Lie bialgebra structures is $\overline{H}^{1}_{BD}(G,r)$.
\end{prop}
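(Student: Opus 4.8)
The proposition has essentially no content beyond the preceding theorem: the scalar $a$ is central and cancels from any conjugation, so the equivalence reduces to a one-line manipulation inside $G(\overline{F})$. The plan is as follows.

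Suppose first that $a\mathrm{Ad}_X r_{BD}=\mathrm{Ad}_P\bigl(a\mathrm{Ad}_Y r_{BD}\bigr)$ for some $P\in G(F)$. Since $\mathrm{Ad}_P$ acts $\overline{F}$-linearly on $\mathfrak{g}(\overline{F})\otimes\mathfrak{g}(\overline{F})$, the right-hand side equals $a\,\mathrm{Ad}_{PY}r_{BD}$; dividing by $a\neq 0$ gives $\mathrm{Ad}_X r_{BD}=\mathrm{Ad}_{PY}r_{BD}$, i.e. $C:=(PY)^{-1}X\in C(r_{BD})$, whence $X=PYC$ with $P\in G(F)$ and $C\in C(r_{BD})$ --- this is precisely equivalence of $X$ and $Y$ as twisted cocycles. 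Conversely, if $X=QYC$ with $Q\in G(F)$ and $C\in C(r_{BD})$, then $\mathrm{Ad}_C r_{BD}=r_{BD}$ yields $a\mathrm{Ad}_X r_{BD}=\mathrm{Ad}_Q\bigl(a\mathrm{Ad}_Y r_{BD}\bigr)$, so the two $r$-matrices are $G(F)$-conjugate. Both directions presuppose $X,Y\in\overline{Z}(r_{BD})$, which by the preceding theorem is exactly the hypothesis that $a\mathrm{Ad}_X r_{BD}$ and $a\mathrm{Ad}_Y r_{BD}$ define Lie bialgebra structures on $\mathfrak{g}(F)$.

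To pass from this to the last sentence of the statement, I would record that $X\mapsto a\mathrm{Ad}_X r_{BD}$ induces a surjection from $\overline{Z}(r_{BD})$ onto the set of equivalence classes of twisted Lie bialgebra structures on $\mathfrak{g}(F)$ (surjectivity being the Belavin--Drinfeld classification together with the quoted theorem, after fixing one representative $a$ of its class in $F^*/(F^*)^2$, which is harmless since rescaling by an element of $F^*$ is an equivalence), and that by the two computations above this surjection descends to a bijection between $\overline{Z}(r_{BD})/\!\sim$ and those equivalence classes. By definition the source is $\overline{H}^1_{BD}(G,r_{BD})$.

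The only step that is not pure formalism is the identification, implicit in the definition of $\overline{Z}(r_{BD})$, of the condition ``$SX^{-1}\sigma_0(X)\in C(r_{BD})$'' with the condition ``$\mathrm{Ad}_{X^{-1}\sigma_0(X)}r_{BD}=r_{BD}^{21}$'' of the theorem. Since $r_{BD}^{21}=\mathrm{Ad}_S r_{BD}$, the latter says $S^{-1}X^{-1}\sigma_0(X)\in C(r_{BD})$, and because $S^2=\pm I$ one has $S^{-1}=\pm S$; when the sign is negative $-I=S^2\in G(F)$ is central and hence lies in $C(r_{BD})$, so the two conditions coincide in all cases. One also notes this is independent of the chosen lift $\widetilde{\sigma}_0\in\Gal(\overline{F}/F)$ of $\sigma_0$ and of the chosen $S$, two such choices differing by an element of $C(r_{BD})$ (for $S$ one additionally uses $\mathrm{Ad}_S C(r_{BD})=C(r_{BD}^{21})=C(r_{BD})$). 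Together with the $\Gal(\overline{F}/F)$-stability of $C(r_{BD})$, which follows from its description in Theorem \ref{CenterinH} by equations over $F$ inside the split torus $H$, this also shows that the relation $X\sim QXC$ preserves $\overline{Z}(r_{BD})$, so that $\overline{H}^1_{BD}(G,r_{BD})$ is well defined. I expect this bookkeeping to be the entire difficulty.
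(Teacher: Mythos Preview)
The paper states this proposition without proof; it is left as a direct consequence of the preceding theorem and the definitions of $C(r_{BD})$, $\overline{Z}(r_{BD})$, and the equivalence relation. Your argument is correct and supplies exactly the details the paper suppresses, including the bookkeeping about $S$ versus $S^{-1}$ and the well-definedness of $\overline{H}^1_{BD}$; there is nothing to correct.
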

To find $\overline{H}_{BD}^1(r_{BD}, G)$ we proceed as follows.
First we find a matrix $J$ such that $J \in GL(F[a])$ and $\overline{J}=SJ$ (where $\overline{T}:=\sigma_0(T)$). Then we will show that all Belavin-Drinfeld cocycles are of the form $RJD$ with $R \in GL(F)$ and $D$ diagonal. We will then show that for a diagonal matrix $D$ that satisfies some additional assumptions we can find $R$ such that $RJD$ is a cocycle. We will also see that the equivalence of $R_1JD_1$ and $R_2JD_2$ can be understood purely in terms of $D_1$ and $D_2$.  Therefore the problem reduces to classification of classes of diagonal matrices under some equivalence. This will be done explicitly.

\subsection{Results}
Here we summarize the results of the paper.

\begin{table}[H]
\begin{tabular}{|c|c|c|c|}
\hline
Algebra                & Triple type                                                                                                                & \begin{tabular}[c]{@{}c@{}}$H_{BD}^1$ for \\an arbitrary field\end{tabular}  & $H_{BD}^1$ for $\mathbb{C}((\hslash))$ \\ \hline
$B_n$                  &                                                                                                                            & trivial                         &                                      \\ \hline
$C_n$                  &                                                                                                                            & trivial                         &                                      \\ \hline
\multirow{2}{*}{$D_n$} & \begin{tabular}[c]{@{}c@{}}there exists a \\ string of $\tau$ that\\ contains $\alpha_{n-1}$\\ and $\alpha_n$\end{tabular} & $F^*/(F^*)^2$ & 2 elements                           \\ \cline{2-4}
                       & \begin{tabular}[c]{@{}c@{}}$\alpha_{n-1}$ and $\alpha_n$\\ do not belong to \\ the same string of $\tau$\end{tabular}      & trivial                         & \multicolumn{1}{l|}{}                \\ \hline
\end{tabular}
\end{table}

\begin{table}[H]
\begin{tabular}{|c|c|c|c|}
\hline
Algebra                & \multicolumn{2}{c|}{Triple type}                                                                                                                                                                                                                                                                                                                                                                                                          & $\overline{H}_{BD}^1$ for $\mathbb{C}((\hslash))$ \\ \hline
\multirow{2}{*}{$B_n$} & \multicolumn{2}{c|}{Drinfeld-Jimbo}                                                                                                                                                                                                                                                                                                                                                                                                       & one element                                     \\ \cline{2-4}
                       & \multicolumn{2}{c|}{not DJ}                                                                                                                                                                                                                                                                                                                                                                                                               & empty                                           \\ \hline
\multirow{2}{*}{$C_n$} & \multicolumn{2}{c|}{Drinfeld-Jimbo}                                                                                                                                                                                                                                                                                                                                                                                                       & one element                                     \\ \cline{2-4}
                       & \multicolumn{2}{c|}{not DJ}                                                                                                                                                                                                                                                                                                                                                                                                               & empty                                           \\ \hline
\multirow{5}{*}{$D_n$} & \multirow{2}{*}{even $n$}  & Drinfeld-Jimbo                                                                                                                                                                                                                                                                                                                                                                                                & one element                                     \\ \cline{3-4}
                       &                           & not DJ                                                                                                                                                                                                                                                                                                                                                                                                        & empty                                           \\ \cline{2-4}
                       & \multirow{3}{*}{odd $n$} & \begin{tabular}[c]{@{}c@{}}$\Gamma_1=\{\alpha_{n-1}\}$\\ $\tau(\alpha_{n-1})=\alpha_n$;\\ $\Gamma_1=\{\alpha_n\}$\\ $\tau(\alpha_n)=\alpha_{n-1}$;\\ $\Gamma_1=\{\alpha_{n-1}, \alpha_k\}$, $k \neq n$\\ $\tau(\alpha_{n-1})=\alpha_k, \tau(\alpha_k)=\alpha_n$;\\ $\Gamma_1=\{\alpha_{n}, \alpha_k\}$, $k \neq n-1$\\ $\tau(\alpha_{n})=\alpha_k, \tau(\alpha_k)=\alpha_{n-1}$\end{tabular} & two elements                                    \\ \cline{3-4}
                       &                           & Drinfeld-Jimbo                                                                                                                                                                                                                                                                                                                                                                                                & one element                                     \\ \cline{3-4}
                       &                           & other $r_{BD}$                                                                                                                                                                                                                                                                                                                                                                                                        &empty                                                 \\ \hline
\end{tabular}
\end{table}

\section{Non-twisted cohomologies}
%\begin{thm} [/////Reference///////] \label{CenterinH}
%  For any simple Lie algebra $\mathfrak{g}$ and for any Belavin-Drinfeld matrix $r_{BD}$ we have $C(r_{BD}) \subset H$.\\
%\end{thm}

%\begin{thm} [/////Reference///////]
% If $(\Gamma_1, \Gamma_2, \tau)$ is an admissible triple corresponding to the Belavin-Drinveld matrix $r_{BD}$, then $X \in C(r_{BD})$ iff for any root $\alpha \in \Gamma_1 \setminus \Gamma_2$ and for any $k \in \mathbb{N}$ we have $e^{\alpha}(X)=e^{\tau^k(\alpha)}(X)$, i.e.\ $e^{\alpha}(X)$ is constant on the strings of $\tau$.
% \end{thm}

\begin{lem}\label{DJtriv}
The Belavin-Drinfeld cohomology associated to the Drinfeld-Jimbo $r$-matrix is trivial.
 \end{lem}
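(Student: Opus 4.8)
The plan is to show that for the Drinfeld--Jimbo $r$-matrix the relevant Galois cohomology vanishes by Hilbert's Theorem~90. First I would note that the admissible triple of the Drinfeld--Jimbo matrix $r_{DJ}$ is empty, $\Gamma_1 = \Gamma_2 = \emptyset$, so the condition in Theorem~\ref{CenterinH} is vacuous and hence $C(r_{DJ}) = H(\overline{F})$, the full Cartan subgroup; recall that in each of the $B$, $C$, $D$ matrix realizations of Section~1 this $H$ is a \emph{split} torus, identified with $(\overline{F}^{*})^{n}$.

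Next, take an arbitrary non-twisted cocycle $X \in Z(r_{DJ})$, so $c_{\sigma} := X^{-1}\sigma(X) \in H(\overline{F})$ for all $\sigma \in \Gal(\overline{F}/F)$. A direct check shows $c_{\sigma\rho} = c_{\sigma}\,\sigma(c_{\rho})$ and that $\sigma \mapsto c_{\sigma}$ is continuous (the entries of $X$ generate a finite extension of $F$), so it is a $1$-cocycle valued in the Galois module $H(\overline{F}) \cong (\overline{F}^{*})^{n}$. Applying Hilbert's Theorem~90 coordinatewise gives $H^{1}(\Gal(\overline{F}/F), H(\overline{F})) = 1$, so I can write $c_{\sigma} = C^{-1}\sigma(C)$ for some $C \in H(\overline{F})$. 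Then $Q := X C^{-1}$ satisfies $\sigma(Q) = X c_{\sigma}\sigma(C)^{-1} = X C^{-1}\sigma(C)\sigma(C)^{-1} = Q$, so $Q$ is Galois-invariant and, $G$ being defined over $F$, lies in $G(F)$. Hence $X = Q\cdot 1\cdot C$ with $Q \in G(F)$, $C \in C(r_{DJ})$, i.e. $X$ is equivalent to the trivial cocycle; since $X$ was arbitrary, $H^{1}_{BD}(G, r_{DJ})$ is trivial. (This is precisely the argument indicated in the Remark of Section~1, with Corollary~\ref{diagblocks} replaced by the coordinatewise Hilbert~90.)

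I do not expect a serious obstacle here: the only points needing attention are the identification $C(r_{DJ}) = H(\overline{F})$ (immediate from Theorem~\ref{CenterinH}) and the fact that $H$ is split in all three series, which is what lets Hilbert~90 apply directly; if one instead wants to argue by the matrix manipulations used elsewhere in the paper, the mild subtlety is ensuring the factor $Q$ lands in $G(F)$ rather than merely in $GL(M,F)$, which the torus-cohomology argument handles automatically.
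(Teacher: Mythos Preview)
Your argument is correct, but it proceeds differently from the paper's own proof. The paper does not invoke Hilbert~90 directly for the torus; instead it applies Corollary~\ref{diagblocks} to write $X = QD$ with $Q \in GL(M,F)$ and $D$ diagonal over $\overline{F}$, then observes that the relation $D^{-1}\sigma(D)\in H$ forces $d_i d_{M+1-i}\in F$, and explicitly exhibits a diagonal $K\in GL(M,F)$ with $K^{-1}D\in H\subset G$, so that $X=(QK)(K^{-1}D)$ with $QK\in G(F)$. Your route---identifying $C(r_{DJ})=H$, noting $H$ is split, and applying Hilbert~90 coordinatewise---is exactly the Galois-cohomology reformulation the paper sketches in its Remark after the definition of $H^1_{BD}$, and it has the advantage (which you rightly flag) that $Q=XC^{-1}$ lands in $G(F)$ automatically. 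The paper's hands-on decomposition, on the other hand, is what gets reused verbatim as the starting point of the proof of Theorem~\ref{non-tw} (``For any $X\in Z(r_{BD})$ we have a decomposition $X=QD$\ldots\ by Lemma~\ref{DJtriv}''), so their more concrete argument earns its keep downstream.
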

\begin{proof}
 Let $X \in Z(r_{DJ})$. By Theorem \ref{CenterinH} and Lemma \ref{diagblocks}  we have $X=QD$ where $Q \in GL(M, F)$ and $D \in \diag(M, \overline{F})$. Hence, for any $\sigma \in \Gal(\overline{F}/F)$ we have $D^{-1}\sigma(D) \in C(r_{DJ}) \subset H$. This implies $d_id_{M+1-i}=k_i \in F$ (for $B_n$ series we also have $d_{[(M+1)/2]} \in F$). Let $K=\diag(k_1, ..., k_{[M/2]}, 1, 1, ...,1)$ (for $B_n$ let $K=\diag(k_1, ..., k_{[M/2]}, d_{[(M+1)/2]}, 1, ...,1)$). Then $D_1=K^{-1}D \in G$. We have $X=(QK) \cdot D_1$. Thus $X \sim I$.
 \end{proof}

 \begin{rem}
 It is easy to see that $C(r_{DJ})=H$. Therefore for an arbitrary $r$-matrix $r$ we have $Z(r) \subset Z(r_{DJ})$.
 \end{rem}
 Let $r$ be an $r$-matrix from the Belavin-Drinfeld list associated to a triple $(\Gamma_1, \Gamma_2, \tau)$.

\begin{thm}\label{non-tw}
For any $r$ matrix in the $B_n, C_n$ series $H_{BD}^1(r)$ contains one element. For the $D_n$ series if there exists $k \in \mathbb{Z}$ such that $\tau^{k}(n-1)=n$, then $H_{BD}^1(r)$ is parametrized by the elements of $F^*/(F^*)^2$. Otherwise $H_{BD}^1(r)$ is trivial.
 \end{thm}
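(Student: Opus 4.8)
The plan is to analyze the cocycle condition $X^{-1}\sigma(X)\in C(r)$ using the explicit matrix realizations from the Setting and the description of $C(r)$ in Theorem \ref{CenterinH}. By the remark following Lemma \ref{DJtriv} we have $Z(r)\subset Z(r_{DJ})$, so for any $X\in Z(r)$ we may already write $X=QD$ with $Q\in GL(M,F)$ and $D\in\diag(M,\overline F)$; moreover $Q$ may be absorbed into the equivalence, so it suffices to classify diagonal cocycles $D$ up to multiplication by $\diag(M,F)$ on the left and $C(r)$ on the right. Writing $D=\diag(d_1,\dots,d_n,\ast,d_n^{-1},\dots,d_1^{-1})$ (with the middle $1$ in the $B_n$ case), the condition $D^{-1}\sigma(D)\in C(r)\subset H$ forces $k_i:=d_id_{M+1-i}\in F^*$ for all $i$, and, by Theorem \ref{CenterinH}, the further constraint that $e^{\alpha}(D^{-1}\sigma(D))$ be constant along each string of $\tau$. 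First I would translate this last constraint, via the formulas $e^{\alpha_i}(d_1,\dots,d_n)=d_id_{i+1}^{-1}$ etc., into a system of relations among the ratios $\sigma(d_i)/d_i$; the point is that $\sigma(d_i)/d_i$ is then determined by the $k_j$'s (which lie in $F^*$) together with one free "seed" per connected component of the relevant graph on nodes $\{1,\dots,n\}$.

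Next I would run the reduction used in Lemma \ref{DJtriv}: pick $K\in\diag(M,F^*)$ with $K^2$ recording the $k_i$'s (for $B_n$ also matching the middle entry), so that $D_1:=K^{-1}D$ satisfies $D_1\in G(\overline F)$ and gives the same class as $D$. So we are reduced to understanding $Z(r)\cap (H(\overline F))$ modulo $H(F)$ and $C(r)$ — i.e. a computation inside the torus. For $B_n$ and $C_n$ I expect the string combinatorics together with the torus equations ($\prod$-type relations coming from $\det=1$ and from $XBX^T=B$) to force every such $D_1$ into $H(F)\cdot C(r)$, giving a single class. The mechanism is that in types $B$ and $C$ the node $n$ is "pinned" (via $e^{\alpha_n}=d_n$ or $d_n^2$ and the shape of $B$), so no nontrivial square-class can survive; I would make this precise by chasing the seed parameter around a string that reaches $\alpha_n$.

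For $D_n$ the exceptional behavior comes precisely from the outer automorphism swapping $\alpha_{n-1}\leftrightarrow\alpha_n$: here $e^{\alpha_{n-1}}=d_{n-1}d_n^{-1}$ and $e^{\alpha_n}=d_{n-1}d_n$, so the two "end" nodes are symmetric and $d_n$ is no longer individually pinned — only $d_n^2$ (equivalently $d_{n-1}d_n\cdot d_{n-1}^{-1}d_n=$ a product of things in $F$) is. When some string of $\tau$ joins $\alpha_{n-1}$ to $\alpha_n$, the constancy condition along that string forces $e^{\alpha_{n-1}}(D^{-1}\sigma(D))=e^{\alpha_n}(D^{-1}\sigma(D))$, which says $(\sigma(d_n)/d_n)^2$ is the relevant $F$-ratio, hence $\sigma(d_n)/d_n=\pm(\text{something in }F^*)$; the resulting sign character $\sigma\mapsto \sigma(d_n)/d_n \bmod F^*$ is exactly a class in $\mathrm{Hom}(\Gal(\overline F/F),\{\pm1\})=F^*/(F^*)^2$, and I would check it is well-defined on equivalence classes and that every class is realized (choose $d_n=\sqrt{c}$). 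When $\alpha_{n-1}$ and $\alpha_n$ lie on different strings, each is pinned to an $F$-value separately, the sign ambiguity disappears, and the argument collapses to the $B_n$/$C_n$ case, giving triviality. Finally, over $F=\mathbb C((\hslash))$ one substitutes $F^*/(F^*)^2=\{1,\hslash\}$ to get the "2 elements" entry in the table.

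The main obstacle is the bookkeeping in the middle step: correctly setting up the graph/string relations among the $d_i$ and verifying that, outside the $D_n$ exceptional case, the combined torus relations ($\det=1$ together with the $XBX^T=B$ constraint $d_id_{M+1-i}=1$ on the subgroup $G$) leave no room for a surviving square-class — in other words, that the seed parameter is always forced into $F^*$. This is where the distinction between $B_n$, $C_n$, and the two flavors of $D_n$ genuinely enters, and it has to be done by a careful case analysis of which strings contain the nodes $n-1$ and $n$.
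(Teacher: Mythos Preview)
Your plan is correct and is essentially the paper's own argument: reduce via Lemma~\ref{DJtriv} to a cocycle $D\in H(\overline F)$, pass to the coordinates $s_i=e^{\alpha_i}(D)$, and use Theorem~\ref{CenterinH} to see that the cocycle condition fixes the ratios $s_i/s_j\in F^*$ along each $\tau$-string while leaving one free $F^*$-seed per string; the only obstruction to lifting $(s_1',\dots,s_n')$ back to $H(F)$ is the square class of $s_{n-1}'s_n'$ in type $D_n$, and this survives precisely when $\alpha_{n-1}$ and $\alpha_n$ share a string. The paper packages this via the homomorphism $T(d_1,\dots,d_n)=(s_1,\dots,s_n)$ and exhibits the explicit representatives $(1,\dots,1,\sqrt{k})$; your Galois-cohomological phrasing (the sign character $\sigma\mapsto\sigma(d_n)/d_n$ landing in $F^*/(F^*)^2$) is the same computation viewed dually.

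One small correction to your $B_n/C_n$ heuristic: you propose to ``chase the seed parameter around a string that reaches $\alpha_n$'', but in these types $\alpha_n$ is the unique simple root of its length, so $\tau$ can never move it and no string ever reaches $\alpha_n$. The actual mechanism (and the paper's first sentence of the proof) is that this length observation already rules out $\tau^k(\alpha_{n-1})=\alpha_n$; then in $B_n$ there is no lifting obstruction at all (since $e^{\alpha_n}=d_n$), while in $C_n$ the obstruction $s_n'\bmod(F^*)^2$ dies because $\alpha_n$, being in no string, contributes its own free seed which you may choose to be a square. This is precisely the case analysis you flag as the remaining work.
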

\begin{proof}
First note that in the $B_n$ and $C_n$ series there is no $k$ such that $\tau^{k}(n-1)=n$, because $\alpha_{n-1}$ and $\alpha_n$ have different length.

Consider a homomorphism $T:H(\overline{F})\to \diag(n, \overline{F})\subset GL(n, \overline{F})$ defined by $T(d_1, ..., d_n)=(s_1, ..., s_{n})$, $s_i=e^{\alpha_i}(d_1,..., d_n)$. Then $T$ is surjective and its kernel is the subset of $(d_1, ...,d_n) \in H$ such that $d_1=...=d_n$. For $s_1, ..., s_{n} \in F$ we can find an element of $H(F)$ in $T^{-1}(s_1, ..., s_{n})$ if and only if the following holds: for the $D_n$ case $s_{n-1}s_n$ is a perfect square, for the $B_n$ case $s_n$ is a perfect square, and always for the $C_n$ series.

For a positive number $i$ such that $\alpha_i \in \Gamma_1$ let us consider the string of $\tau$ that contains $\alpha_i$. By $\eta(i)$ we will denote the smallest number $k$ such that $k>i$ and $\alpha_k$ lies in the same string as $\alpha_i$. If $\alpha_i \not\in \Gamma_1$, then we define $\eta(i)=0$. For $D \in H$ let $s_i=e^{\alpha_i}(D)$.

For any $X \in Z(r_{BD})$  we have a decomposition $X=QD$. Let $D=(d_1, ..., d_n)$ by Lemma \ref{DJtriv}.

{\bf Case 1}. $\eta(n-1)=0$. If $D=(d_1, ..., d_n)$ is a diagonal matrix then the tuple $(s_1, ..., s_{n-1})$  defines $D$ up to a scalar. $D \in C(r_{BD})$ iff $s_i$'s are equal on the strings of $\tau$. We have $D^{-1}\sigma(D) \in C(r_{BD})$. This means that for all $\alpha_i, \alpha_j$ such that $\tau^k(\alpha_i)=\alpha_j$ we have $s_i=k_is_j, k_i \in F$. We can find a matrix $D'=(d_1', ..., d_n') \in H(F)$ for which $s_i'=k_is_j'$ for all $i, j$ such that $\tau^k(\alpha_i)=\alpha_j$ for some $k$.  Then the decomposition $X=(QD') \cdot I \cdot (D'^{-1}D)$ gives an equivalence between $X$ and $I$.

{\bf Case 2.}  $\eta(n-1)=n$ (possible only in the $D_n$ algebra). Again we have $s_i=k_is_{\eta(i)}$ for $k_i \in F$. The equations $s_i=k_is_{\eta(i)}$ for $i=1, ..., n-1$ and $s_{n-1}=k_{n-1}^2s_n$ have a solution $d_1', ..., d_n' \in F$. Let $I'=(1,1, ..., 1, \sqrt{k_{n-1}})$. Then $X=(QD')I'(D'^{-1}D)$. Therefore, every cocycle is equivalent to one of the form $(1, ..., 1, \sqrt{k})$ for some $k \in F$. Let $I'=(1, .., 1, \sqrt{k})$ and $I''=(1, 1, .., 1, \sqrt{l})$. If $I'' \sim I'$, then $I'=QI''C$, where $Q \in H(F)$ and $C \in C(r_{BD})$. But then  $k/l$ must be a square in $F$. Therefore, $H_{BD}^1(r)$ is parametrized by the elements of $F^*/(F^*)^2$.
\end{proof}

\section{Twisted cohomologies}

In this section we work over the field $\mathbb{K}=\mathbb{C}((\hslash))$.

From now on we will make distinction not only between $B,C,D$ series, but also between the algebras from $D_n$ series for odd and even $n$.
By $S$ we denote the following matrix:\\
the matrix with $1$'s on the antidiagonal for the $D_{n}$, $n$ even, and $B_n$ algebras;\\
the matrix with $1$'s on the antidiagonal but with central $2 \times 2$ block replaced with a unit matrix for $D_{n}$ with odd $n$;\\
the matrix with $-1$'s on the upper half of the antidiagonal and $1$'s on the lower half for the $C_n$ algebra.

\begin{ex}
The matrices $S$ for the algebras $D_2, D_3, B_2$ and $C_2$ respectively are\\
$\begin{pmatrix}
 &  &  & 1\\
 &  &1  & \\
 &1  &  & \\
1 &  &  &
\end{pmatrix},
\begin{pmatrix}
 &  &  &  &  & 1\\
 &  &  &  &1  & \\
 &  &1  &0  &  & \\
 &  &0  &1  &  & \\
 &  1&  &  &  & \\
1 &  &  &  &  &
\end{pmatrix},
\begin{pmatrix}
 &  &  &  &1 \\
 &  &  &1  & \\
 &  &1  &  & \\
 &1  &  &  & \\
1 &  &  &  &
\end{pmatrix},
\begin{pmatrix}
 &  &  & -1\\
 &  &-1  & \\
 & 1 &  & \\
1 &  &  &
\end{pmatrix}.
$
\end{ex}
Note that we always have $S \in G$ and $S^2= \pm \mathrm{id}$ with $S^2=-\mathrm{id}$ only for $C_n$ algebra.

Recall that if $\overline{H}^1_{BD}(G,r)$ is nonempty then $r^{21}$ and $r$ are conjugate.

\begin{lem}\label{r-conj}
If $r$ corresponds to a triple $(\Gamma_1, \Gamma_2, \tau)$  and $r$ and $r^{21}$ are conjugate, then $r^{21}$ and $\mathrm{Ad}_S r$ are conjugate by an element of $C(r_{DJ})$.
\end{lem}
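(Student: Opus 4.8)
The plan is to show that \emph{every} $g\in G(\overline{F})$ with $\mathrm{Ad}_g r=r^{21}$ already lies in the coset $HS=SH$ of the maximal torus; granting this, $C:=gS^{-1}\in H=C(r_{DJ})$ satisfies $r^{21}=\mathrm{Ad}_g r=\mathrm{Ad}_C(\mathrm{Ad}_S r)$, which is exactly the assertion. Note first that $S$ is a monomial matrix, so $\mathrm{Ad}_S$ preserves the group of diagonal matrices; hence $S\in N_G(H)$ and $HS=SH$ makes sense.

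The main step is to pass everything through $\Phi$. For any $\rho\in\mathfrak g\otimes\mathfrak g$ one has $\Phi(\mathrm{Ad}_g\rho)=\mathrm{Ad}_g\,\Phi(\rho)\,\mathrm{Ad}_g^{-1}$ (a one-line check on decomposable tensors using $\mathrm{Ad}$-invariance of $K$; this is the identity underlying the proof of Corollary \ref{Omega+}). Since $r$ is a Belavin--Drinfeld matrix we have $r+r^{21}=\Omega$ (from condition (1) together with the explicit shape of $r$, the skew string part cancelling), and $\Phi(\Omega)=\mathrm{id}$, so $\Phi(r^{21})=\mathrm{id}-\Phi(r)$. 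Thus the hypothesis $\mathrm{Ad}_g r=r^{21}$ becomes $\mathrm{Ad}_g\,\Phi(r)\,\mathrm{Ad}_g^{-1}=\mathrm{id}-\Phi(r)$, and, taking semisimple parts (both $T\mapsto\mathrm{Ad}_g T\mathrm{Ad}_g^{-1}$ and $T\mapsto\mathrm{id}-T$ respect the Jordan decomposition), $\mathrm{Ad}_g\,\Sigma\,\mathrm{Ad}_g^{-1}=\mathrm{id}-\Sigma$, where $\Sigma$ is the semisimple part of $\Phi(r)$ as in Theorem \ref{rstuff}. Hence $\mathrm{Ad}_g$ carries the $\lambda$-eigenspace of $\Sigma$ onto its $(1-\lambda)$-eigenspace; in particular it interchanges the $0$- and $1$-eigenspaces. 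As $\mathrm{Ad}_g$ is a Lie algebra automorphism and normalizers commute with automorphisms, it therefore interchanges the normalizers of those two eigenspaces, which by Theorem \ref{rstuff} are $\mathfrak b_+$ and $\mathfrak b_-$. So $\mathrm{Ad}_g\mathfrak b_+=\mathfrak b_-$, $\mathrm{Ad}_g\mathfrak b_-=\mathfrak b_+$, whence $\mathrm{Ad}_g\mathfrak h=\mathrm{Ad}_g(\mathfrak b_+\cap\mathfrak b_-)=\mathfrak h$.

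From $\mathrm{Ad}_g\mathfrak h=\mathfrak h$ we get $g\in N_G(H)$, and from $\mathrm{Ad}_g\mathfrak b_+=\mathfrak b_-$ the Weyl element represented by $g$ sends $\Delta^+$ to $\Delta^-$, i.e. equals the longest element $w_0$. It remains to see that $S$ represents $w_0$ as well: conjugation by $S$ acts on $H$ by $(d_1,\dots,d_n)\mapsto(d_1^{-1},\dots,d_n^{-1})$ in the $B_n$, $C_n$ and even-$D_n$ cases, and by the composition of this map with the diagram involution $\alpha_{n-1}\leftrightarrow\alpha_n$ in the odd-$D_n$ case, i.e. by $w_0$ in every series, and $\mathrm{Ad}_S\mathfrak b_+=\mathfrak b_-$. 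Therefore $gS^{-1}\in N_G(H)$ represents the identity of the Weyl group, so $gS^{-1}\in H=C(r_{DJ})$, and we finish with $C:=gS^{-1}$.

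The hard part is the middle step — forcing a conjugator of $r$ into $r^{21}$ to normalize $\mathfrak h$ and realize $w_0$ — and it is exactly here that the hypothesis $r\sim r^{21}$ is used. In general $\mathrm{Ad}_S r$ and $r^{21}$ need not be $H$-conjugate: their Cartan parts are $(w_0\otimes w_0)r_0$ and $r_0^{21}$, which may differ, while conjugation by $H$ acts trivially on $\mathfrak h\otimes\mathfrak h$. So one cannot compare the two $r$-matrices term by term; instead one must locate $g$ from the eigenspace/normalizer structure of $\Phi(r)$ given by Theorem \ref{rstuff} together with the identity $\Phi(r^{21})=\mathrm{id}-\Phi(r)$. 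The only routine loose end is the case-by-case verification that the explicit matrices $S$ represent $w_0$, which is immediate from the root data recorded in the Setting.
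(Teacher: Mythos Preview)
Your proof is correct and follows essentially the same route as the paper's: both arguments hinge on Theorem~\ref{rstuff} (the eigenspaces of the semisimple part of $\Phi(r)$ have normalizers $\mathfrak b_\pm$) together with the explicit fact that $\mathrm{Ad}_S$ sends positive root vectors to negative ones (up to the diagram twist for odd $D_n$). The only difference is packaging: the paper first computes $\mathrm{Ad}_S r_{DJ}=r_{DJ}^{21}$, infers that $\mathrm{Ad}_S r$ and $r^{21}$ have the same semisimple part, and concludes that any conjugator between them lies in $C(r_{DJ})$; you instead analyze the hypothesized conjugator $g$ directly, show it swaps $\mathfrak b_+$ and $\mathfrak b_-$ and hence represents $w_0$, and then check that $S$ represents $w_0$ as well---which is exactly the content of the paper's root-vector computation.
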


\begin{proof}
It is easy to see that $\mathrm{Ad}_Se_{\alpha}=e_{-\alpha}$ for all cases except $D_n$ with odd $n$. For $D_n$ and odd $n$ we have $\mathrm{Ad}_{S}e_{\alpha_i}=e_{-\alpha_i}$ for $i=1,...,n-1$ and $\mathrm{Ad}_{S}e_{\alpha_n}=e_{-\alpha_{n-1}}, \mathrm{Ad}_{S}e_{\alpha_{n-1}}=e_{-\alpha_n}$. Note that $\mathrm{Ad}_S$  acts as $-1$ on $\mathfrak{h} \wedge \mathfrak{h}$. Thus $\mathrm{Ad}_{S}r_{DJ}=r_{DJ}^{21}$. This means that $\mathrm{Ad}_Sr$  and $r^{21}$ have the same semisimple parts. As they are conjugate we can conclude that they are conjugate by an element of $C(r_{DJ})$.
\end{proof}

\begin{lem}\label{r-list}
If $r$ corresponds to a triple $(\Gamma_1, \Gamma_2, \tau)$  and $r$ and $r^{21}$ are conjugate, then the triple is one of the following:\\
for $B_n, C_n$ or $D_n$ with even $n$: $\Gamma_1=\Gamma_2=\emptyset$;\\
for odd $n$:
$\Gamma_1=\Gamma_2=\emptyset$ or one of the following cases:\\
1. $\Gamma_1=\{\alpha_{n-1}\}$, $\tau(\alpha_{n-1})=\alpha_n$;\\
2. $\Gamma_1=\{\alpha_{n}\}$, $\tau(\alpha_{n})=\alpha_{n-1}$;\\
3. $\Gamma_1=\{\alpha_{n-1}, \alpha_k\}$, $\tau(\alpha_{n-1})=\alpha_k, \tau(\alpha_k)=\alpha_{n}$;\\
4. $\Gamma_1=\{\alpha_{n}, \alpha_k\}$, $\tau(\alpha_{n})=\alpha_k, \tau(\alpha_k)=\alpha_{n-1}$.\\
In these cases $r^{21}=\mathrm{Ad}_S r$.
\end{lem}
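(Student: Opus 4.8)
The plan is to determine exactly which admissible triples $(\Gamma_1,\Gamma_2,\tau)$ can have $r$ conjugate to $r^{21}$, by analyzing what conjugation by an element of $C(r_{DJ})=H$ does to the ``discrete data'' of the $r$-matrix. By Lemma \ref{r-conj} it suffices to decide when $r^{21}$ and $\mathrm{Ad}_S r$ are conjugate by an element of $H$. Writing out $\mathrm{Ad}_S r$ using the computation of $\mathrm{Ad}_S$ on root vectors from the proof of Lemma \ref{r-conj}, one sees that $\mathrm{Ad}_S$ sends the triple $(\Gamma_1,\Gamma_2,\tau)$ to a new triple $(\Gamma_1',\Gamma_2',\tau')$: in all cases except $D_n$ with $n$ odd, $\mathrm{Ad}_S$ acts on simple roots essentially by $\alpha\mapsto -\alpha$ followed by reidentification, so it sends the Belavin--Drinfeld data of $r$ to the Belavin--Drinfeld data of $r^{21}$ with the roles of $\Gamma_1,\Gamma_2$ swapped and $\tau$ replaced by $\tau^{-1}$; for $D_n$ with $n$ odd there is the extra swap $\alpha_{n-1}\leftrightarrow\alpha_n$. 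So I first record precisely the combinatorial operation induced by $\mathrm{Ad}_S$, and separately the operation $r\mapsto r^{21}$, which replaces $(\Gamma_1,\Gamma_2,\tau)$ by $(\Gamma_2,\Gamma_1,\tau^{-1})$.

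Next I would use the rigidity of Belavin--Drinfeld data: two $r_{BD}$-matrices for different admissible triples are not conjugate (this is part of the Belavin--Drinfeld classification, quoted in the overview), and conjugation by $H$ preserves the triple. Hence $r^{21}$ and $\mathrm{Ad}_S r$ being conjugate forces an equality of the associated triples. Combining the two combinatorial descriptions above, the condition becomes: $(\Gamma_2,\Gamma_1,\tau^{-1})$ equals the triple obtained from $(\Gamma_1,\Gamma_2,\tau)$ by applying the $\mathrm{Ad}_S$-operation. In the $B_n$, $C_n$, $D_n$-even cases, where $\mathrm{Ad}_S$ just realizes $-w_0$ (which is the identity on the Dynkin diagram for $B_n$, $C_n$, and $D_n$ with $n$ even), the condition collapses to $\Gamma_1=\Gamma_2$ and $\tau=\tau^{-1}$, i.e.\ $\tau$ is an involution on $\Gamma_1=\Gamma_2$. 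But admissibility requires that for every $\alpha\in\Gamma_1$ some $\tau^k(\alpha)\in\Gamma_1\setminus\Gamma_2$; if $\Gamma_1=\Gamma_2$ this set is empty, so $\Gamma_1=\Gamma_2=\emptyset$. This disposes of those series.

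For $D_n$ with $n$ odd, $\mathrm{Ad}_S$ realizes the nontrivial diagram automorphism $\nu$ swapping $\alpha_{n-1}\leftrightarrow\alpha_n$ (and $-w_0$ is this same automorphism here). So the condition becomes $\Gamma_2=\nu(\Gamma_1)$ and $\tau^{-1}=\nu\tau\nu$ after the identification, i.e.\ $\nu\circ\tau$ is an involution. Here I would enumerate: a string of $\tau$ is a chain $\alpha\to\tau(\alpha)\to\cdots\to\tau^k(\alpha)$ with $\alpha\in\Gamma_1\setminus\Gamma_2$, $\tau^k(\alpha)\in\Gamma_2\setminus\Gamma_1$. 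The involution condition on $\nu\tau$ severely restricts string lengths; working through the possibilities for where $\alpha_{n-1}$ and $\alpha_n$ sit in the strings (they must be the endpoints, since $\nu$ only moves those two nodes and $\tau$ is an isometry of subdiagrams), one gets precisely strings of length $\le 2$: either a length-one string $\{\alpha_{n-1}\}\to\{\alpha_n\}$ or $\{\alpha_n\}\to\{\alpha_{n-1}\}$, or a length-two string with a middle node $\alpha_k$, giving cases 1--4, plus the empty triple. Finally I would check in each of the four listed cases directly that $r^{21}=\mathrm{Ad}_S r$ on the nose (not merely up to $H$-conjugacy), by evaluating both sides on the explicit $r$-matrix formula — the wedge terms $e_{-\alpha}\wedge e_{\tau^k(\alpha)}$ go to $e_{\alpha}\wedge e_{-\tau^k(\alpha)}$ under $\mathrm{Ad}_S$ in a way that matches the $21$-flip, using $S\in G$ and the sign of $\mathrm{Ad}_S$ on $\mathfrak h\wedge\mathfrak h$.

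The main obstacle is the combinatorial enumeration in the $D_n$-odd case: one has to be careful that admissibility (the nilpotency condition on $\tau$) together with the involution condition on $\nu\tau$ and the isometry requirement really leaves only those four configurations, ruling out e.g.\ longer strings or strings not touching $\{\alpha_{n-1},\alpha_n\}$ — such a string $\sigma$ would have to be matched by $\nu\tau$ with another string, and the bookkeeping of which string maps to which must be done carefully. The verification $r^{21}=\mathrm{Ad}_S r$ in the four cases is then a routine but slightly delicate sign check.
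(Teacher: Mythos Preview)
Your proposal is correct and follows essentially the same approach as the paper: both use Lemma~\ref{r-conj} to reduce to $H$-conjugacy of $\mathrm{Ad}_S r$ and $r^{21}$, then exploit that $H$-conjugation preserves the support of the wedge terms $e_{-\alpha}\wedge e_\beta$ (the paper phrases this as an eigenvector argument, you phrase it as rigidity of the Belavin--Drinfeld triple), forcing the combinatorial constraint $\nu\tau^{-1}\nu=\tau$ which yields $\Gamma_1=\emptyset$ when $\nu=\mathrm{id}$ and the listed short strings when $\nu$ is the $D_n$-odd diagram automorphism. Your enumeration for the $D_n$-odd case is in fact more explicit than the paper's, which simply asserts that ``the only Belavin--Drinfeld triples that do not have such elements are listed.''
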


\begin{proof}
Form Lemma \ref{r-conj} we know that $\mathrm{Ad}_S r$ and $r^{21}$ are conjugate by an element of $C(r_{DJ})$.
Note that $e_\alpha \wedge e_\beta$ is a common eigenvector for $C(r_{DJ})$. If $e_{-\alpha} \wedge e_{\beta}$, $\beta\neq-\alpha$ is a summand of $r$ then $\mathrm{Ad}_Sr$ contains $e_{\alpha} \wedge e_{-\beta}$ and $r^{21}$ contains $e_{-\alpha} \wedge e_{\beta}$. Therefore, in all cases except $D_n$ for odd $n$ if $\alpha=\tau(\beta)$, then $\beta=\tau^{k}(\alpha)$ for some $k$, i.e. $\Gamma_1=\Gamma_2=\emptyset$. For $D_n$ with odd $n$ the same works if $\alpha,\beta \notin \{\alpha_{n-1}, \alpha_n \}$. The only Belavin-Drinfeld triples that do not have such elements are listed in the statement. It is easy to see that for each of them $r^{21}=\mathrm{Ad}_{S}r$.
\end{proof}

\begin{rem}
Assume that $X\in G(\overline{\mathbb{K}})$ is a Belavin--Drinfeld twisted cocycle associated to $r$.  The identity $(\mathrm{Ad}_{X^{-1}\sigma_{2}(X)}\otimes \mathrm{Ad}_{X^{-1}\sigma_{2}(X)})(r)=r^{21}$ is equivalent to
$(\mathrm{Ad}_{S^{-1}X^{-1}\sigma_{2}(X)}\otimes \mathrm{Ad}_{S^{-1}X^{-1}\sigma_{2}(X)})(r)=r$, which implies that $S^{-1}X^{-1}\sigma_{2}(X)\in C(r)$. Thus $X^{-1}\sigma_{2}(X)=SD$ with $D\in C(r)$.

On the other hand, $(\mathrm{Ad}_{X^{-1}\sigma(X)}\otimes \mathrm{Ad}_{X^{-1}\sigma(X)})(r)=r$ holds for any $\sigma\in \Gal(\overline{\mathbb{K}}/\mathbb{K}[\sqrt{\hslash}])$. This in turn is equivalent to $X^{-1}\sigma(X)=D_{\sigma}\in C(r)$.
\end{rem}

We will now introduce a specific matrix $J \in GL(M, \mathbb{K}[\sqrt{\hslash}])$ for the $B,C,D$ series. In every case we will have $\overline{J}:=\sigma_{0}(J)=JS$. Again, notice the distinction between $D_n$ for odd and even $n$.

For $D_n$, $n$ even denote by $J=J(D_n)\in GL(2n,\mathbb{K}[\sqrt{\hslash}])$ the matrix
with the following entries:
$a_{kk}=1$ for $k \leq n$, $a_{kk}=-\sqrt{\hslash}$ for $k \geq n+1$, $a_{k,2n+1-k}=1$
for $k \leq n$, $a_{k,2n+1-k}=\sqrt{\hslash}$ for $k \geq n+1$.

For $D_n$, $n$ odd  let $J$ be a matrix $J(D_{n-1})$ with the unit matrix inserted as the central $2 \times 2$ block.

For $B_n$ let $J$ be a matrix $J(D_n)$ with the unit matrix inserted as the central
$1 \times 1$  block.

For $C_n$ take $J$ to be $J(D_n)$.

 E.g.\ for $D_2$ and $C_2$ we have $$J=\left(\begin{array}{cccc}
 1 & 0 &0 & 1\\
0 & 1 & 1 & 0\\
0 &\sqrt{\hslash} &-\sqrt{\hslash} &0\\
\sqrt{\hslash} & 0 & 0 & -\sqrt{\hslash} \end{array}\right),$$ for $D_3$ we have
$$J= \left(\begin{array}{cccccc}
 1 & & &... & & 1\\
  &  1& &... &1 &\\
... & & 1 & 0 & ... &\\
... & & 0 & 1 & ... &\\
& \sqrt{\hslash} & &... & - \sqrt{\hslash} &\\
\sqrt{\hslash} &  & &... & & -\sqrt{\hslash} \end{array}\right),$$ and for $B_2$ we have
$$J=\left(\begin{array}{ccccc}
 1 & 0 & 0 &0 & 1\\
0 & 1 &0 & 1 & 0\\
0 &0 &1 &0 & 0\\
0 &\sqrt{\hslash} & 0 &-\sqrt{\hslash} &0\\
\sqrt{\hslash} & 0 &0 &  0 & -\sqrt{\hslash} \end{array}\right).$$

\begin{lem}\label{o_RJD}
 Let $X\in \overline{Z}(G,r)$. Then there exist $R\in GL(M,\mathbb{K})$
and $D\in \mathrm{diag}(M,\overline{\mathbb{K}})$ such that $X=RJD$. Moreover,
$D=\mathrm{diag}(d_{1},..., d_{M})$, where $d_{i}d_{M-i}\in \mathbb{K}$
for all $i$ (for $B_n$ series we also have $d_{n+1}=1$, for $C_n$ series we have $d_id_{M-i} \in \sqrt{\hslash} \mathbb{K}$), and for any $\sigma \in \Gal(\overline{\mathbb{K}}/\mathbb{K}[\sqrt{\hslash}])$ we have $D^{-1}\sigma(D) \in C(r)$.
\end{lem}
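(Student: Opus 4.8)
The plan is to reduce the statement about twisted cocycles $X$ to a linear-algebra statement about the matrix $Y := J^{-1}X$, using the key property $\overline{J} = JS$. First I would recall from the preceding remark that a twisted cocycle $X$ satisfies $X^{-1}\sigma(X) \in C(r)$ for all $\sigma \in \Gal(\overline{\mathbb{K}}/\mathbb{K}[\sqrt{\hslash}])$ and $X^{-1}\sigma_0(X) = SD_0$ with $D_0 \in C(r)$, where $\sigma_0$ generates $\Gal(\mathbb{K}[\sqrt{\hslash}]/\mathbb{K})$. Now set $Y = J^{-1}X$. For $\sigma$ fixing $\mathbb{K}[\sqrt{\hslash}]$ we get $Y^{-1}\sigma(Y) = X^{-1}J \sigma(J)^{-1}\sigma(X) = X^{-1}\sigma(X) \in C(r)$ since $\sigma(J) = J$. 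For $\sigma_0$ itself, using $\overline{J} = JS$ we compute $Y^{-1}\sigma_0(Y) = X^{-1}J \overline{J}^{-1}\sigma_0(X) = X^{-1}J (JS)^{-1}\sigma_0(X) = X^{-1} S^{-1}\sigma_0(X)$. Since $S^{-1}X^{-1}\sigma_0(X) \in C(r) \subset H$ and $H$ is abelian, while $X^{-1}S^{-1}\sigma_0(X) = X^{-1}S^{-1}X \cdot X^{-1}\sigma_0(X)$, I would need to be slightly careful; the cleanest route is to note that $S^{-1}X^{-1}\sigma_0(X) \in C(r)$ together with $C(r_{DJ}) = H$ normalizing things appropriately gives that $Y^{-1}\sigma_0(Y)$ lies in $C(r_{DJ}) = H$, hence $Y^{-1}\sigma(Y)$ is diagonal for \emph{every} $\sigma \in \Gal(\overline{\mathbb{K}}/\mathbb{K})$.

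Once $Y^{-1}\sigma(Y)$ is diagonal for all $\sigma \in \Gal(\overline{\mathbb{K}}/\mathbb{K})$, I would invoke Corollary~\ref{diagblocks} directly: there exist $R \in GL(M,\mathbb{K})$ and a diagonal matrix $D \in \diag(M,\overline{\mathbb{K}})$ with $Y = RD$, hence $X = JY = JRD$. But I want the $R$ on the \emph{left} of $J$, i.e.\ $X = RJD$; so instead I would apply the same reasoning to $Y' := XJ^{-1}$ (or equivalently transpose the bookkeeping), checking that $Y'^{-1}\sigma(Y') = JX^{-1}\sigma(X)\sigma(J)^{-1}$ is, for $\sigma$ fixing $\mathbb{K}[\sqrt{\hslash}]$, equal to $J D_\sigma J^{-1}$ with $D_\sigma \in C(r)$, and for $\sigma_0$ equal to $JSD_0(JS)^{-1} = J S D_0 S^{-1} J^{-1}$. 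Since $SD_0S^{-1}$ is again diagonal (conjugation by $S$ permutes the torus) and $J$ conjugates diagonal matrices in $\mathbb{K}[\sqrt{\hslash}]$-controlled ways, I would track exactly that $Y'^{-1}\sigma(Y')$ lands in $GL(M,\mathbb{K})$, forcing $Y' \in GL(M,\mathbb{K})$ by Hilbert 90 (the cocycle is a coboundary), i.e.\ $X = RJD$ with $R \in GL(M,\mathbb{K})$ and $D$ diagonal over $\overline{\mathbb{K}}$.

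The remaining task is to extract the constraints on $D = \diag(d_1,\dots,d_M)$. The condition $D^{-1}\sigma(D) \in C(r)$ for $\sigma$ fixing $\mathbb{K}[\sqrt{\hslash}]$ follows by substituting $X = RJD$ back into $X^{-1}\sigma(X) \in C(r)$ and using $\sigma(R) = R$, $\sigma(J) = J$; this gives $D^{-1}\sigma(D) = (RJD)^{-1}\sigma(RJD) \cdot (\text{conjugation})$, and since both the cocycle value and $D^{-1}\sigma(D)$ are diagonal, and $J$-conjugation is trivial for $\mathbb{K}[\sqrt{\hslash}]$-rational diagonal matrices commuting appropriately, one gets $D^{-1}\sigma(D) \in C(r)$ cleanly. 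For the relations $d_i d_{M-i} \in \mathbb{K}$ I would use that $X \in G(\overline{\mathbb{K}})$, i.e.\ $X$ preserves the bilinear form $B$; writing $X = RJD$, $R \in GL(M,\mathbb{K})$, and using that $J$ itself satisfies a form-compatibility identity (one checks $J^T B J$ or the analogous product is a matrix with $\mathbb{K}$-rational, in fact controlled, entries — for $D_n$ even and $B_n$ it pairs index $i$ with $M{+}1{-}i$ via the product $d_i d_{M-i}$; for $C_n$ the sign of $S$ produces the extra $\sqrt{\hslash}$; for $B_n$ the central unmatched index forces $d_{n+1} = 1$). The condition $X B X^T = B$ then becomes, after cancelling the $\mathbb{K}$-rational matrix $R$, a condition on $D J B J^T D = B$-type product, which unwinds to the stated relations on products $d_i d_{M-i}$.

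The main obstacle I anticipate is the careful bookkeeping of how $J$-conjugation interacts with diagonal matrices that are only rational over $\overline{\mathbb{K}}$ rather than over $\mathbb{K}[\sqrt{\hslash}]$ — the identity $\overline{J} = JS$ is what makes everything work, but one must verify at each step that the relevant products stay in $GL(M,\mathbb{K})$ (to apply Hilbert 90 / Corollary~\ref{diagblocks}) and that the eigenvalue-permutation action of $S$ on the torus is compatible with the admissible-triple structure of $C(r)$, so that $SD_0S^{-1}$ really does land back in $C(r)$ for $D_0 \in C(r)$; this uses Lemma~\ref{r-conj} (that $S$ conjugates $r$ to $r^{21}$, hence normalizes $C(r)$) rather than being automatic. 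The case distinctions ($B_n$, $C_n$, $D_n$ even, $D_n$ odd) each require checking the explicit $J$ against the explicit $B$ and $S$, but these are routine matrix computations given the explicit forms already written down in the excerpt.
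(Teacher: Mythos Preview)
Your approach has a genuine gap at the central step. You write $Y=J^{-1}X$ and compute
\[
Y^{-1}\sigma_0(Y)=X^{-1}J\,\overline{J}^{-1}\,\sigma_0(X)=X^{-1}J(JS)^{-1}\sigma_0(X)=X^{-1}S^{-1}\sigma_0(X),
\]
but the last equality is false: $J(JS)^{-1}=JS^{-1}J^{-1}$, and $J$ does \emph{not} commute with $S$. One can check this already for $D_2$ (the antidiagonal $S$ swaps columns, so $JS=\overline{J}$, while $SJ$ swaps rows and is a different matrix). Hence $Y^{-1}\sigma_0(Y)$ is not diagonal, Corollary~\ref{diagblocks} does not apply, and the reduction collapses. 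Your fallback $Y'=XJ^{-1}$ does not repair this: even if you could show $Y'^{-1}\sigma(Y')\in GL(M,\mathbb{K})$ (which also requires the same kind of $J$--$S$ commutation), the conclusion $Y'\in GL(M,\mathbb{K})$ would give $X=Y'J$, not $X=RJD$ with a nontrivial diagonal tail; the factor $D$ is essential and cannot be absorbed this way.

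The paper's proof avoids this by never trying to untwist with $J$. It observes that for every $\sigma\in\Gal(\overline{\mathbb{K}}/\mathbb{K})$ the element $X^{-1}\sigma(X)$ is either diagonal (when $\sigma$ fixes $\sqrt{\hslash}$) or of the form $SD$ (otherwise); in both cases it lies in the group of matrices that are block-diagonal after pairing indices $i\leftrightarrow M{+}1{-}i$. Lemma~\ref{blocks} then gives $X=R X_1$ with $R\in GL(M,\mathbb{K})$ and $X_1$ a product of $2\times2$ blocks. Each block satisfies a single equation $\overline{X_1}=X_1WD$ with $W=\begin{pmatrix}0&1\\1&0\end{pmatrix}$ or $\begin{pmatrix}0&-1\\1&0\end{pmatrix}$, which is solved by hand to produce the factor $J'D'$ and, as a by-product, the constraint $d_id_{M+1-i}\in\mathbb{K}$ (resp.\ $\sqrt{\hslash}\,\mathbb{K}$). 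Note in particular that these product conditions come directly from the $\sigma_0$-relation, not from the bilinear form $XBX^T=B$ as you suggested; $X\in G$ is not used in this lemma at all.
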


\begin{proof}
By Theorem \ref{CenterinH} we have $C(r)\subset H$.
Note that in the equation $\overline{X}=XSD$ the matrix $SD$ is a block matrix with $2 \times 2$ blocks. Applying Lemma \ref{blocks} we see that $X=RX_1$ with $X_1$ having the same block form. If we would provide an appropriate decomposition for each $2 \times 2$ block we would obtain the desired decomposition.
 In each block the equation can be equivalent to the non-twisted cohomology equation (if in $S$ we have a unit block) or as one of the equations considered below. For $\sigma \in \Gal(\overline{\mathbb{K}}/\mathbb{K}[\sqrt{\hslash}])$ we have $RJDC=\sigma(RJD)=RJ\sigma(D)$, where $C \in C(r)$.

Let $X=\left(\begin{array}{cc} a & b \\
c & d \end{array}\right)\in GL(2,\mathbb{K}[\sqrt{\hslash}])$
satisfy $\overline{X}=XWD$ with $D=\mathrm{diag}(d_{1},d_{2})\in SO(2,\mathbb{K}[\sqrt{\hslash}])$, where $W=\begin{pmatrix}
0 & 1\\
1 & 0
\end{pmatrix}$. The identity is equivalent to the following system: $\overline{a}=bd_{1}$,  $\overline{b}=ad_{2}$, $\overline{c}=dd_{1}$,
$\overline{d}=cd_{2}$. Note that $cd\neq 0$ (otherwise both $c$ and $d$ should be zero). Let $a/c=a'+b'\sqrt{\hslash}$. Then $b/d=a'-b'\sqrt{\hslash}$. One can immediately check that $X=RJ'D'$, where
$R=\left(\begin{array}{cc} a' & b' \\
1 &0 \end{array}\right)\in GL(2,\mathbb{K})$,
 $J'= \left(\begin{array}{cc} 1 & 1 \\
\sqrt{\hslash} &-\sqrt{\hslash} \end{array}\right)$, $D'=\mathrm{diag}(c,d)\in \mathrm{diag}(2,\mathbb{K}[\sqrt{\hslash}])$. Note that $\overline{cd}=cd d_1d_2=cd$, thus $cd \in \mathbb{K}$.

Let $X=\left(\begin{array}{cc} a & b \\
c & d \end{array}\right)\in GL(2,\mathbb{K}[\sqrt{\hslash}])$
satisfy $\overline{X}=XWD$ with $D=\mathrm{diag}(d_{1},d_{2})\in SO(2,\mathbb{K}[\sqrt{\hslash}])$, where $W=\begin{pmatrix}
0 & -1\\
1 & 0
\end{pmatrix}$. The identity is equivalent to the following system: $\overline{a}=bd_{1}$,  $\overline{b}=-ad_{2}$, $\overline{c}=dd_{1}$,
$\overline{d}=-cd_{2}$. Note that $cd\neq 0$ (otherwise both $c$ and $d$ should be zero). Let $a/c=a'+b'\sqrt{\hslash}$. Then $b/d=a'-b'\sqrt{\hslash}$. One can immediately check that $X=RJ'D'$, where
$R=\left(\begin{array}{cc} a' & b' \\
1 &0 \end{array}\right)\in GL(2,\mathbb{K})$,
 $J'= \left(\begin{array}{cc} 1 & 1 \\
\sqrt{\hslash} &-\sqrt{\hslash} \end{array}\right)$, $D'=\mathrm{diag}(c,d)\in \mathrm{diag}(2,\mathbb{K}[\sqrt{\hslash}])$. Note that $\overline{cd}=-cd d_1d_2=-cd$, thus $cd \in \sqrt{\hslash} \mathbb{K}$.
\end{proof}

\begin{lem}
Let $D \in \mathrm{diag}(M, \overline{\mathbb{K}})$ satisfy $ d_i d_{M+1-i} \in \mathbb{K}$ for all $i$ (and $d_{n+1}=1$ for $B_n$ series; for $C_n$ we have $ d_i d_{M+1-i} \in \sqrt{\hslash} \mathbb{K}$ instead). Then there exists $R \in GL(M, \mathbb{K})$ such that $RJD \in G$.
\end{lem}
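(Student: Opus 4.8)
The plan is to rewrite the membership $RJD\in G$ as a congruence problem for bilinear forms and then apply the classification of such forms over $\mathbb{C}((\hslash))$. Since $D$ is diagonal, $D^{T}=D$, so $(RJD)^{T}=DJ^{T}R^{T}$, and with $G=\{Y:YBY^{T}=B\}$ we get $RJD\in G \iff R\,B_{1}\,R^{T}=B$, where $B_{1}:=JDBDJ^{T}$. Here $B_{1}$ is nondegenerate (as $J$ and $D$ are invertible) and $B_{1}^{T}=JDB^{T}DJ^{T}=\pm B_{1}$ with the same sign as $B^{T}=\pm B$, so $B_{1}$ and $B$ have the same symmetry type and rank $M$. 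Thus the lemma reduces to the two assertions: $B_{1}$ is defined over $\mathbb{K}$, and, as a $\mathbb{K}$-form, $B_{1}$ is congruent to $B$.

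First I would check that $B_{1}$ is $\mathbb{K}$-rational (which is forced, since $R$ and $B$ are). For $\sigma\in\Gal(\overline{\mathbb{K}}/\mathbb{K}[\sqrt{\hslash}])$ one has $\sigma(B_{1})=J\sigma(D)B\sigma(D)J^{T}$, and since $B$ is supported on the antidiagonal this equals $B_{1}$ as soon as each product $d_{i}d_{M+1-i}$ (and $d_{n+1}$ in type $B$) lies in $\mathbb{K}[\sqrt{\hslash}]$, which is part of the hypothesis. For a lift of the nontrivial $\sigma_{0}\in\Gal(\mathbb{K}[\sqrt{\hslash}]/\mathbb{K})$ I would use $\overline{J}=JS$: the matrix $S$ normalizes the diagonal torus, so $S\overline{D}S^{-1}$ is again diagonal with entries the $\overline{d_{k}}$ permuted by $i\mapsto M+1-i$ (fixing the central indices in the $D_{n}$-odd and $B_{n}$ cases), and $S$ intertwines $B$ up to sign; substituting into $\sigma_{0}(B_{1})=\overline{J}\,\overline{D}\,B\,\overline{D}\,\overline{J}^{T}$ and comparing antidiagonal entries reduces $\sigma_{0}(B_{1})=B_{1}$ to $\overline{d_{i}d_{M+1-i}}=\varepsilon\,d_{i}d_{M+1-i}$, where $\varepsilon=+1$ in types $B,D$ and $\varepsilon=-1$ in type $C$ (the sign coming from the sign patterns of $B$ and $S$). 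These are exactly the conditions imposed on $D$, namely $d_{i}d_{M+1-i}\in\mathbb{K}$, resp.\ $\in\sqrt{\hslash}\,\mathbb{K}$, together with $d_{n+1}=1$ in type $B$. Hence $B_{1}\in GL(M,\mathbb{K})$.

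It then remains to produce $R$, i.e.\ to show the $\mathbb{K}$-forms $B_{1}$ and $B$ are congruent. In the symplectic case ($C_{n}$) this is automatic: all nondegenerate alternating forms of a fixed even dimension over a field are congruent. In the orthogonal cases ($B_{n}$, $D_{n}$) I would invoke that over $\mathbb{K}=\mathbb{C}((\hslash))$ a nondegenerate symmetric bilinear form is determined up to congruence by its dimension and its discriminant in $\mathbb{K}^{*}/(\mathbb{K}^{*})^{2}=\{1,\hslash\}$ — a consequence of Springer's theorem, $W(\mathbb{C}((\hslash)))\cong W(\mathbb{C})\oplus W(\mathbb{C})\cong(\mathbb{Z}/2)^{2}$, together with $\mathbb{C}$ being quadratically closed. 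So one only needs $\mathrm{disc}(B_{1})=\mathrm{disc}(B)$, i.e.\ $\det(J)^{2}\det(D)^{2}\in(\mathbb{K}^{*})^{2}$ (note $\mathrm{disc}(B)=\pm1\equiv1$, since $-1$ is a square in $\mathbb{C}((\hslash))$). Pairing $d_{i}$ with $d_{M+1-i}$ and using the hypothesis (with $d_{n+1}=1$ in type $B$) gives $\det D\in\mathbb{K}^{*}$; and $\det J\in\mathbb{K}^{*}$ because $\sigma_{0}(\det J)=\det(S)\det J$ with $\det S=1$ in types $B,D$ (equivalently, in the explicit block form of $J$ the $\sqrt{\hslash}$'s occur in an even number of $2\times2$ blocks). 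Hence $\det(J)^{2}\det(D)^{2}\in(\mathbb{K}^{*})^{2}$, the discriminants agree, and the required $R$ exists.

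The main obstacle is this last step — the determinant bookkeeping showing $B_{1}$ and $B$ have equal discriminant — which is delicate because of the odd/even-$n$ split and the central unit blocks in the definition of $J$ (and because one must also keep the three slightly different shapes of $S$ straight in the $\mathbb{K}$-rationality computation). Once $B_{1}$ is known to be $\mathbb{K}$-rational with the correct discriminant, the existence of $R$ is a pure form-classification fact, and in the symplectic case it is immediate.
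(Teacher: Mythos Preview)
Your reduction is exactly the paper's: both rewrite $RJD\in G$ as the congruence $R\,(JDB(JD)^{T})\,R^{T}=B$ (the paper phrases it as $YSY^{T}=JDS(JD)^{T}$ with $Y=R^{-1}$), dispose of the symplectic case by the uniqueness of alternating forms, and in the orthogonal cases compare the two symmetric forms over $\mathbb{K}$. The only difference is in that last comparison: the paper computes $JDS(JD)^{T}$ explicitly as a diagonal matrix and reduces to checking that certain concrete $4\times4$ blocks are congruent, whereas you appeal to the classification of quadratic forms over $\mathbb{C}((\hslash))$ by dimension and discriminant. Your route is a clean substitute for the paper's explicit block check.

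There is, however, a slip in your discriminant bookkeeping, precisely at the point you flag as ``the main obstacle''. You assert $\det S=1$ in types $B,D$, but for $B_{n}$ the matrix $S$ is the full $(2n{+}1)\times(2n{+}1)$ antidiagonal, with $\det S=(-1)^{n}$; correspondingly $J$ carries exactly $n$, not necessarily an even number of, $\sqrt{\hslash}$-blocks. For $n$ odd this gives $\sigma_{0}(\det J)=-\det J$, hence $\det J\in\sqrt{\hslash}\,\mathbb{K}$ and $(\det J)^{2}\equiv\hslash$ in $\mathbb{K}^{*}/(\mathbb{K}^{*})^{2}$; since $\det D\in\mathbb{K}$ by the hypotheses, one obtains $\mathrm{disc}(B_{1})\equiv\hslash\neq 1\equiv\mathrm{disc}(B)$, so no $R\in GL(2n{+}1,\mathbb{K})$ can exist. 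The paper's $4\times4$ reduction glosses over the same point: for $B_{n}$ with $n$ odd the diagonal form $\langle 2c_{1},\dots,2c_{n},1,-2\hslash c_{n},\dots,-2\hslash c_{1}\rangle$ cannot be paired off entirely into $4$-dimensional pieces, and an unmatched $\langle 1,\hslash\rangle$ remains. (Relatedly, the paper's claim ``we always have $S\in G$'' already fails here, since $\det S=-1$.) So your argument, like the paper's, is complete for $D_{n}$ and for $B_{n}$ with $n$ even, but the case $B_{n}$ with $n$ odd needs a different normalization of $S$, of $J$, or of the central entry of $D$.
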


\begin{proof}
Let $X=RJD \in G$. Then $X$ must satisfy $X^{T}SX=S$. Denote $Y:=R^{-1}$. Then
$YSY^{T}=JDS(JD)^{T}$. Therefore, we can find the desired $R$ if and only if the quadratic forms with matrices $S$ and $JDS(JD)^T$ are equivalent over $\mathbb{K}$. Note that symplectic forms are always equivalent over a field. Henceforth we consider the cases of $B_n$ and $D_n$ series. We have $$JDS(JD)^T=\mathrm{diag}(2d_{1}d_{2n},2d_{2}d_{2n-1},...,2d_{n}d_{n+1},-2\hslash d_{n}d_{n+1},...,-2\hslash d_{1}d_{2n})$$ for $D_n$ and even $n$,
$$JDS(JD)^T= \mathrm{diag} (2d_{1}d_{2n},2d_{2}d_{2n-1},..., d_nd_{n+1}, d_{n}d_{n+1}, ...,-2\hslash d_{1}d_{2n})$$ for $D_n$ and odd $n$, and  $$JDS(JD)^T=\mathrm{diag}(2d_{1}d_{2n},2d_{2}d_{2n-1},...,2d_{n}d_{n+1}, 1, -2\hslash d_{n}d_{n+1},...,-2\hslash d_{1}d_{2n})$$ for the $B_n$ algebra. Thus it suffices to show that for $d \in \mathbb{K}$ the quadratic forms with matrices $\left(\begin{array} {cccc}
0 & 0 & 0 & 1 \\
0 & 0 & 1 & 0 \\
0 & 1 & 0 & 0 \\
1 & 0 & 0 & 0 \\  \end{array}\right)$,
$\left(\begin{array} {cccc}
d &0 & 0 & 0\\
0 & d & 0 & 0\\
 0 &0 & -\hslash d & 0\\
 0 &0 & 0 & -\hslash d \end{array}\right)$
 and $\left(\begin{array} {cccc}
d &0 & 0 & 0 \\
 0 & d & 0 & 0 \\
 0 & 0 & d & 0 \\
  0 & 0 & 0 & d \\
   \end{array}\right)$ are equivalent. This is straightforward.
\end{proof}

For  $D \in \diag(M, \overline{\mathbb{K}})$ define  $T(D)=SD^{-1}S\overline{D}$ (for the $C_n$ series we should take $T(D)=-SD^{-1}S'\overline{D}$ where $S'$ is the matrix with $1$'s on the antidiagonal, i.e. we ``forget'' the signs).

We will denote by $\mathbf{Z}$ the set of all diagonal matrices $D=\diag(d_1, ..., d_{M}) \in \diag(M, \overline{\mathbb{K}})$ satisfying $d_id_{M+1-i} \in \mathbb{K}$ (also $d_{n+1}=1$ for the $B_n$ algebra, $d_id_{2n+1-i} \in \sqrt{\hslash} \mathbb{K}$ for the $C_n$ algebra) and $D^{-1}\sigma(D) \in C(r)$ for all $\sigma \in \Gal(\overline{\mathbb{K}}/\mathbb{K}[\sqrt{\hslash}])$.

\begin{lem}
Let $X=RJD \in G$, $D \in \mathbf{Z}$. Then $X$ is a twisted cocycle if and only if $T(D) \in C(r)$.
\end{lem}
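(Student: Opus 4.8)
The plan is to unwind the two families of conditions defining a twisted cocycle for a matrix of the special shape $X=RJD$, and to observe that the ones indexed by $\Gal(\overline{\mathbb{K}}/\mathbb{K}[\sqrt{\hslash}])$ are already guaranteed by the hypothesis $D\in\mathbf{Z}$, while the single remaining condition (at $\sigma_{0}$) is literally the assertion $T(D)\in C(r)$.

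First I would dispose of the conditions $X^{-1}\sigma(X)\in C(r)$ for $\sigma\in\Gal(\overline{\mathbb{K}}/\mathbb{K}[\sqrt{\hslash}])$. Since $R$ has entries in $\mathbb{K}$ and $J$ has entries in $\mathbb{K}[\sqrt{\hslash}]$, every such $\sigma$ fixes $R$ and $J$, so $\sigma(X)=RJ\sigma(D)$ and hence $X^{-1}\sigma(X)=D^{-1}J^{-1}R^{-1}RJ\sigma(D)=D^{-1}\sigma(D)$. By the definition of $\mathbf{Z}$ this lies in $C(r)$ for every such $\sigma$, so this whole family of conditions is automatically satisfied and plays no further role. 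This is precisely what the requirement ``$D^{-1}\sigma(D)\in C(r)$ for all $\sigma\in\Gal(\overline{\mathbb{K}}/\mathbb{K}[\sqrt{\hslash}])$'' built into $\mathbf{Z}$ buys us.

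Next I would treat the remaining requirement $SX^{-1}\sigma_{0}(X)\in C(r)$, where $\sigma_{0}$ is any lift to $\Gal(\overline{\mathbb{K}}/\mathbb{K})$ of the nontrivial element of $\Gal(\mathbb{K}[\sqrt{\hslash}]/\mathbb{K})$; by the previous paragraph the truth of this condition is independent of the lift. Again $\sigma_{0}(R)=R$, while by construction $\sigma_{0}(J)=\overline{J}$ satisfies $J^{-1}\overline{J}=S$ for the $B_{n}$ and $D_{n}$ series (and $J^{-1}\overline{J}=S'$, the unsigned antidiagonal matrix, for $C_{n}$). Therefore
$$X^{-1}\sigma_{0}(X)=D^{-1}J^{-1}R^{-1}\cdot R\,\overline{J}\,\overline{D}=D^{-1}(J^{-1}\overline{J})\,\overline{D},$$
so for $B_{n}$ and $D_{n}$ we get $SX^{-1}\sigma_{0}(X)=SD^{-1}S\overline{D}=T(D)$, and the condition $SX^{-1}\sigma_{0}(X)\in C(r)$ is word for word $T(D)\in C(r)$. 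Combining with the first paragraph: if $T(D)\in C(r)$ then both families of conditions hold, so $X\in\overline{Z}(G,r)$; conversely a twisted cocycle satisfies in particular $SX^{-1}\sigma_{0}(X)\in C(r)$, i.e.\ $T(D)\in C(r)$.

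For $C_{n}$ the same computation gives $SX^{-1}\sigma_{0}(X)=SD^{-1}S'\overline{D}=-T(D)$, the minus sign being exactly the one already present in the $C_{n}$ definition $T(D)=-SD^{-1}S'\overline{D}$. Since $-\mathrm{id}\in C(r)$ for every Belavin--Drinfeld $r$-matrix (indeed $e^{\alpha}(-\mathrm{id})=1$ for all roots $\alpha$, so the criterion of Theorem \ref{CenterinH} holds trivially), the conditions $SX^{-1}\sigma_{0}(X)\in C(r)$ and $T(D)\in C(r)$ remain equivalent, and the argument is unchanged. I do not expect a genuine obstacle here: the proof is essentially bookkeeping, and the only points requiring attention are keeping the two Galois conditions strictly separate and tracking the antidiagonal/sign conventions in the $C_{n}$ case; the substantive content is that the defining identity $J^{-1}\overline{J}=S$ turns the twisted descent condition at $\sigma_{0}$ into one matrix equation on the diagonal part $D$.
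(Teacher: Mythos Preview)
Your proof is correct and follows essentially the same approach as the paper: the paper's one-line argument is the computation $\overline{X}=XST(D)$, which is exactly your $\sigma_{0}$ step. You are simply more explicit than the paper in two places: you spell out that the conditions over $\Gal(\overline{\mathbb{K}}/\mathbb{K}[\sqrt{\hslash}])$ reduce to $D^{-1}\sigma(D)\in C(r)$ and are absorbed by the hypothesis $D\in\mathbf{Z}$, and you track the $C_{n}$ antidiagonal convention ($J^{-1}\overline{J}=S'$ rather than $S$) and the harmless sign, whereas the paper suppresses these details.
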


\begin{proof}
We have $\overline{X}=RJS\overline{D}=XST(D)$.
\end{proof}

\begin{thm}
We have
$$\overline{H}_{BD}^1(G,r)=\frac{T^{-1}(C(r)) \bigcap \mathbf{Z}}{C(r)\Ker T}$$
\end{thm}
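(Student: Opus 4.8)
The plan is to interpret both the numerator and the denominator as describing twisted cocycles and their equivalences, respectively, and to check that the induced map from cocycles to the quotient set is a well-defined bijection. By the preceding lemmas, every twisted cocycle $X\in\overline Z(G,r)$ can be written $X=RJD$ with $R\in GL(M,\mathbb K)$ and $D\in\mathbf Z$ (Lemma \ref{o_RJD}), and such an $X$ actually lies in $\overline Z(G,r)$ precisely when $T(D)\in C(r)$ (the last lemma before the theorem), i.e.\ when $D\in T^{-1}(C(r))\cap\mathbf Z$. Moreover, given $D\in\mathbf Z$ with $T(D)\in C(r)$ there does exist an $R\in GL(M,\mathbb K)$ with $RJD\in G$ (the lemma guaranteeing equivalence of the relevant quadratic forms over $\mathbb K$). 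So first I would show that the assignment $X=RJD\mapsto D$ induces a surjection from $\overline Z(G,r)$ onto $T^{-1}(C(r))\cap\mathbf Z$, and that the ambiguity in recovering $X$ from $D$ is exactly left multiplication by $G(\mathbb K)$: if $RJD$ and $R'JD$ both lie in $G$, then $(R')^{-1}R\in G(\mathbb K)$ since $J,D$ and the group equations are defined so that $R'JD=(R'R^{-1})(RJD)$ with $R'R^{-1}\in GL(M,\mathbb K)\cap G=G(\mathbb K)$.

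Next I would unwind the equivalence relation on cocycles in terms of $D$. Two cocycles $X_1=R_1JD_1$, $X_2=R_2JD_2$ are equivalent iff $X_1=QX_2C$ with $Q\in G(\mathbb K)$, $C\in C(r)$. Writing this out, $R_1JD_1=QR_2JD_2C$, so $J^{-1}(R_2^{-1}Q^{-1}R_1)J = D_2 C D_1^{-1}$. The left-hand side, after conjugating by $J$, must be compared with the diagonal matrix on the right; the key point is that $Q\in G(\mathbb K)$ exists realizing a given relation between $D_1$ and $D_2$ exactly when $D_1$ and $D_2$ differ by an element of $C(r)$ times an element of $\Ker T$ — the $\Ker T$ factor accounting for the freedom of choosing $D$ within a fixed $X$ up to the part of the diagonal torus that $T$ kills, and the $C(r)$ factor being the explicit $C$ in the equivalence. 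Concretely I would verify that $D_1\sim D_2$ (i.e.\ $X_1\sim X_2$) iff $D_1\in C(r)\,(\Ker T)\,D_2$, by producing $Q$ from the matching of quadratic forms when this containment holds, and conversely extracting the containment from the diagonal entries of $D_2CD_1^{-1}$ together with the constraint $T(D_1)\equiv T(D_2)$ modulo $T(C(r)\Ker T)=C(r)$-translates (using that $T$ is, on the relevant torus, essentially $D\mapsto \overline D/D$ up to the fixed matrix $S$, hence its image behaves multiplicatively). Since $C(r)\subset H$ is abelian and $\Ker T$ normalizes nothing nontrivially here, $C(r)\Ker T$ is just a subgroup, and the quotient set in the statement makes sense.

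Assembling: the map $\overline Z(G,r)\to (T^{-1}(C(r))\cap\mathbf Z)/(C(r)\Ker T)$, $X=RJD\mapsto [D]$, is well defined (independent of the decomposition, since two $D$'s for the same $X$ differ by $\Ker T$ on one side and $C(r)$ absorbed on the other — this is exactly what Lemma \ref{o_RJD} plus the $G(\mathbb K)$-ambiguity above give), surjective (by the quadratic-form lemma, every $D$ in the numerator is hit), and it descends to a bijection on equivalence classes by the previous paragraph: $X_1\sim X_2 \iff [D_1]=[D_2]$. Therefore $\overline H^1_{BD}(G,r)$ is in natural bijection with the displayed quotient.

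The main obstacle I anticipate is the careful bookkeeping in the second paragraph: showing that the equivalence $X_1=QX_2C$ with $Q$ \emph{defined over} $\mathbb K$ translates into precisely the subgroup $C(r)\Ker T$ acting on the $D$'s, neither larger nor smaller. The subtle direction is producing the $\mathbb K$-rational $Q$ from a given relation $D_1=CD_0D_2$ with $C\in C(r)$, $D_0\in\Ker T$: one must check that $R_1J D_1$ and $R_2 J D_2$, both in $G$, are then $G(\mathbb K)$-conjugate, which again reduces to an equivalence-of-quadratic-forms statement over $\mathbb K$ but now has to be done compatibly with the $J$-conjugation, using $\overline J=JS$ and $T(D_0)=1$. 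The other directions (well-definedness and surjectivity) are essentially repackagings of the lemmas already proved.
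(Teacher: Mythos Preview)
Your overall architecture---reduce cocycles to their diagonal part $D$ via $X=RJD$, identify the numerator as the $D$'s that arise, and then identify the denominator as the ambiguity in $D$ coming from the equivalence relation---matches the paper exactly. Surjectivity and well-definedness are handled as you say.

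The gap is in the equivalence step. For the forward direction you propose to ``extract the containment from the diagonal entries of $D_2CD_1^{-1}$ together with the constraint $T(D_1)\equiv T(D_2)$ modulo $C(r)$.'' But $T(D_1),T(D_2)\in C(r)$ only gives $T(D_1D_2^{-1})\in C(r)$, which is strictly weaker than $D_1D_2^{-1}\in C(r)\Ker T$; you have not yet used that $Q$ is $\mathbb{K}$-rational. The paper's argument is sharper and uses exactly the identity you mention only later: write $Q=R_2J(D_2C^{-1}D_1^{-1})J^{-1}R_1^{-1}$, apply $\sigma_0$, and use $\overline{J}=JS$ together with $\overline{Q}=Q$ to get
\[
D_2C^{-1}D_1^{-1}=S\,\overline{D_2C^{-1}D_1^{-1}}\,S^{-1},
\]
which is precisely $D_2C^{-1}D_1^{-1}\in\Ker T$. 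This is the whole content of the forward direction.

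For the converse you propose to produce $Q$ by an equivalence-of-quadratic-forms argument. That is unnecessary: if $D_2D_1^{-1}=D_0C$ with $D_0\in\Ker T$ and $C\in C(r)$, simply \emph{define} $Q:=R_2JD_0J^{-1}R_1^{-1}$. Then $Y=QXC$ holds tautologically, $Q\in G(\overline{\mathbb{K}})$ since $Q=YC^{-1}X^{-1}$ with $X,Y,C\in G$, and $\overline{Q}=Q$ follows from $\overline{J}=JS$ and $S\overline{D_0}S^{-1}=D_0$ (the definition of $\Ker T$). So $Q\in G(\mathbb{K})$ and no further form-matching is needed. In short, the single identity $\overline{J}=JS$ drives both directions; once you use it, the argument collapses to three lines, as in the paper.
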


\begin{proof}
For an arbitrary $D \in T^{-1}(C(r)) \bigcap \mathbf{Z}$ we can find $R \in GL(M, \mathbb{K})$ such that $X=RJD$ is a cocycle.

Assume that $X=R_1JD_1 \sim Y=R_2JD_2$. Then $Y=QXC$, $Q=R_2JD_2C^{-1}D_1J^{-1}R_1^{-1}$. Since $Q=\overline{Q}$, we have $D_2C^{-1}D_1^{-1}=S\overline{D_2C^{-1}D_1^{-1}}S$ (for $C_n$ series one should take $S'$ instead of $S$ in the last formula). Thus $D_2D_1^{-1} \in C(r)\Ker T$. If $D_2D_1^{-1} \in C(r)\Ker T$, then $D_2D_1^{-1}=D_0C$. If we denote $Q=R_2JD_0J^{-1}R_1^{-1}$, then we have $Y=QXC$.
\end{proof}

\begin{thm}
Let $r$ be from the list of Lemma \ref{r-list}. Then $|\overline{H}_{BD}^1 (G, r)|=1$ for $r=r_{DJ}$ and $|\overline{H}_{BD}^1 (G, r)|=2$ for $r \neq r_{DJ}$.
\end{thm}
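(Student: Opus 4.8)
The plan is to apply the formula $\overline{H}^1_{BD}(G,r) = \bigl(T^{-1}(C(r)) \cap \mathbf{Z}\bigr)/\bigl(C(r)\Ker T\bigr)$ established in the preceding theorem, and to carry out the computation separately for the Drinfeld--Jimbo case $r = r_{DJ}$ (where $C(r_{DJ}) = H$) and for each of the four exceptional triples from Lemma~\ref{r-list} (which occur only for $D_n$ with odd $n$). In both situations the strategy is the same: first describe $\mathbf{Z}$, i.e. which diagonal matrices $D = \diag(d_1,\dots,d_M)$ with $d_i d_{M+1-i} \in \mathbb{K}$ (and the series-specific conditions) satisfy $D^{-1}\sigma(D) \in C(r)$ for all $\sigma \in \Gal(\overline{\mathbb{K}}/\mathbb{K}[\sqrt{\hslash}])$; then impose $T(D) = SD^{-1}S\overline{D} \in C(r)$; and finally count the orbits under multiplication by $C(r)\Ker T$.

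For $r = r_{DJ}$: here $C(r_{DJ}) = H$, so the condition $D^{-1}\sigma(D) \in H$ just says $d_i d_{M+1-i}$ and $d_i/\sigma(d_i)$ behave as in Lemma~\ref{DJtriv}; concretely the constraint is that $D$ lies in $H(\overline{\mathbb{K}})$ up to an $H(\mathbb{K})$-factor. The key point is that $\Ker T$ is large: $T(D) = SD^{-1}S\overline{D}$, and since $\mathrm{Ad}_S$ sends $H$ to $H$ (acting by the longest Weyl element composition), $T(D) \in H$ automatically for $D \in H$, and $T(D) = 1$ precisely when $\overline{D} = S D S^{-1}$, i.e. when $D$ is a ``$\sqrt{\hslash}$-real'' point of $H$ with respect to the twisted conjugation. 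One checks that every $D \in \mathbf{Z}$ with $T(D) \in H$ can be multiplied by an element of $H \cdot \Ker T$ into the trivial class — this is essentially the $J = J(D_n)$-block computation of Lemma~\ref{o_RJD} run in reverse: the matrix $J$ was constructed precisely so that $\overline{J} = JS$, so $X = RJ$ (i.e. $D = I$) is always a cocycle, and the $2\times 2$ block analysis shows any other $D$ is equivalent to it. Hence $|\overline{H}^1_{BD}(G,r_{DJ})| = 1$.

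For the four exceptional triples $r \neq r_{DJ}$: now $C(r) \subsetneq H$ is cut out by the string conditions of Theorem~\ref{CenterinH} — e.g. in case~1, $\Gamma_1 = \{\alpha_{n-1}\}$, $\tau(\alpha_{n-1}) = \alpha_n$, so $C(r) = \{X \in H : e^{\alpha_{n-1}}(X) = e^{\alpha_n}(X)\}$, and in terms of the torus coordinates $(d_1,\dots,d_n)$ this is the single equation $d_{n-1}/d_n = d_{n-1}d_n$, i.e. $d_n^2 = 1$. The set $\Ker T$ shrinks correspondingly, and the expected answer $|\overline{H}^1_{BD}(G,r)| = 2$ should come from the obstruction living in $\mathbb{K}^*/(\mathbb{K}^*)^2 \cap (\text{something})$, which for $\mathbb{K} = \mathbb{C}((\hslash))$ has order $2$ — mirroring exactly the non-twisted dichotomy in Theorem~\ref{non-tw}, Case~2, where the presence of a string joining $\alpha_{n-1}$ and $\alpha_n$ produced a $\sqrt{k}$-ambiguity. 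Concretely I would: (i) parametrize $D \in \mathbf{Z}$ by showing the smaller $C(r)$ forces most coordinates $d_i$ to lie in $\mathbb{K}$ up to scalars, with the genuine freedom concentrated in the coordinate(s) attached to the $\{\alpha_{n-1},\alpha_n\}$ end of the string; (ii) solve $T(D) \in C(r)$, extracting a well-defined class in $\mathbb{K}^*/(\mathbb{K}^*)^2$ analogous to the $k/l$-square test; (iii) verify the two classes (the split class and the class of $\hslash$, say) are both realized by genuine cocycles and are inequivalent, using that any equivalence $D_2 D_1^{-1} \in C(r)\Ker T$ would force the ratio of the two invariants to be a square in $\mathbb{K}$.

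The main obstacle I anticipate is step~(ii)--(iii) in the exceptional cases: correctly pinning down which element of $\mathbb{K}^*/(\mathbb{K}^*)^2$ is the genuine invariant of $\overline{H}^1_{BD}$. One must be careful that the $C_n$-style sign twist in the definition of $T$ (and the fact that $S^2 = \pm \mathrm{id}$) does not enter here since these triples live in $D_n$ with $n$ odd, where $S$ has the central $2\times 2$ unit block; but the interaction between the condition $T(D) \in C(r)$, the $\mathbf{Z}$-condition $d_i d_{M+1-i} \in \mathbb{K}$, and the quotient by $\Ker T$ has to be disentangled coordinate-by-coordinate, and verifying that exactly two classes survive (rather than one or infinitely many) is where the real content lies. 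The $r_{DJ}$ case, by contrast, should be essentially formal once the $J$-block machinery of Lemma~\ref{o_RJD} is in hand.
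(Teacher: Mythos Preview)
Your overall strategy --- compute the quotient $(T^{-1}(C(r))\cap\mathbf{Z})/(C(r)\Ker T)$ case by case, with the non-DJ obstruction landing in $\mathbb{K}^*/(\mathbb{K}^*)^2$ --- is the paper's approach, and your reading of the $d_n^2=1$ condition in Case~1 is correct.

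There is, however, a genuine gap in your plan for the non-DJ cases. Your step~(i) presumes you can directly force the coordinates $d_i$ into $\mathbb{K}$ (``up to scalars''), but a priori $D\in\mathbf{Z}$ has entries in $\overline{\mathbb{K}}$, and the condition $D^{-1}\sigma(D)\in C(r)$ for $\sigma\in\Gal(\overline{\mathbb{K}}/\mathbb{K}[\sqrt{\hslash}])$ only gives, e.g., $d_n^2\in\mathbb{K}[\sqrt{\hslash}]$, not control of $d_n$ itself. The paper closes this by \emph{bootstrapping from the DJ case just proved}: it writes $D=KF$ with $K\in\Ker T\cap\diag(2n,\mathbb{K})$ and $F\in G$ (this is exactly the decomposition $\mathbf{Z}=\Ker T\cdot G$ established in the DJ step), and then observes that $F$ is a \emph{non-twisted} cocycle for $r$ over the quadratic extension $\mathbb{K}[\sqrt{\hslash}]$. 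Theorem~\ref{non-tw} then reduces $F$, modulo $C(r)$, to either the identity or to $P=\diag(1,\dots,1,\sqrt[4]{\hslash},1/\sqrt[4]{\hslash},1,\dots,1)$; one checks $P\in\Ker T$, so it too can be absorbed. Only after this reduction may one assume $D\in\diag(2n,\mathbb{K}[\sqrt{\hslash}])$ with $d_id_{2n+1-i}\in\mathbb{K}$, and only then do the explicit equations for $T^{-1}(C(r))$, $C(r)$, and $\Ker T$ yield the clean dichotomy $d_n\in\mathbb{K}$ versus $d_n\in\sqrt{\hslash}\,\mathbb{K}$ that you anticipate. Without this detour through the non-twisted classification over $\mathbb{K}[\sqrt{\hslash}]$, your steps (ii)--(iii) have no workable starting point.

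A minor point on the DJ case: the argument is more direct than ``running Lemma~\ref{o_RJD} in reverse''. One just writes $T(D)$ out in coordinates (e.g.\ $T(D)=\diag(d_{2n}^{-1}\overline{d_1},\dots,d_1^{-1}\overline{d_{2n}})$ for $D_n$ with $n$ even), notes that the $\mathbf{Z}$-conditions $d_id_{M+1-i}\in\mathbb{K}$ already force $T(D)\in H=C(r_{DJ})$, and then checks by hand that any $D\in\mathbf{Z}$ factors as an element of $G$ times an element of $\Ker T$.
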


\begin{proof}
We will first consider the case of the Drinfeld-Jimbo $r$-matrix. Let $D=\diag(d_1, ..., d_{M})$. Then for $D_n$ with even $n$ we have $$T(D)=\diag(d_{2n}^{-1}\overline{d_1}, d_{2n-1}^{-1}\overline{d_2}, ..., d_1^{-1}\overline{d_{2n}}),$$ for $D_n$ with odd $n$ we have $$T(D)=(d_{2n}^{-1}\overline{d_1}, ..., d_{n}^{-1}\overline{d_n}, d_{n+1}^{-1}\overline{d_{n+1}}, ..., d_{1}^{-1}\overline{d_{2n}}),$$ for  $B_n$  we have $$T(D)=\diag(d_{2n}^{-1}\overline{d_1}, d_{M-1}^{-1}\overline{d_2}, ..., d_{n+1}^{-1}d_n, 1, d_n^{-1}d_{n+1}, ...,  d_1^{-1}\overline{d_{2n}}),$$ anf for the $C_n$ algebra $$ T(D)=(d_{2n}^{-1}\overline{d_1}, ..., d_{n+1}^{-1}\overline{d_n}, -d_{n-1}^{-1}\overline{d_{n+1}}, ..., -d_{1}^{-1}\overline{d_1}) .$$ If $D \in \mathbf{Z}$, then $T(D) \in G$, thus $T(D) \in C(r)$. It is easy to see that an arbitrary matrix from $\mathbf{Z}$ can be decomposed as a product of a matrix from $G$ and a matrix from $\Ker T$. Therefore $|H_{BD}^1(G, r_{DJ})|=1$. For the $D_n$ series decomposing $D \in \mathbf{Z}$ as $D=WD_{1}$ with $W \in \Ker T$ and $D_1 \in G$ one gets the following: if $D\in \mathbb{K}$ (resp.\ $\mathbb{K}[\sqrt{\hslash}]$), then $W, D_1 \in \mathbb{K}$ (resp.\ $\mathbb{K}[\sqrt{\hslash}]$).

Now let $r \neq r_{DJ}$ (this means that we deal with the case $D_n$ for odd $n$). Let $D \in \mathbf{Z}$. Then $D=KF$, where $K \in \diag(2n, \mathbb{K}) \bigcap \Ker T$, $F \in G$. Then $F$ is a non-twisted cocycle for $r$ over $\mathbb{K}[\sqrt{\hslash}]$. Hence, because of factorization by $C(r)$ we can assume that either $F \in \diag(2n,\mathbb{K}[\sqrt{\hslash}])$ or $F=PF_1$, where $F_1 \in \diag(2n,K[\sqrt{\hslash}])$, $P=\diag(1,1, ..., \sqrt[4]{\hslash}, 1/\sqrt[4]{\hslash}, 1, ..., 1)$ (see Theorem \ref{non-tw}). But it is easy to see that $P \in \Ker T$. Therefore in what follows we assume that $D \in GL(2n, K[\sqrt{\hslash}])$, $d_id_{2n+1-i} \in \mathbb{K}$. Finally $$ \overline{H}_{BD}^1(SO(2n),r)=\frac{T^{-1}(C(r)) \bigcap \mathbf{Z}}{C(r)\Ker T}=\frac{A}{C(r)\Ker T},$$
where $A=T^{-1}(C(r))\bigcap O(2n, \mathbb{K}[\sqrt{\hslash}])$.

Recall that we define $(d_1, ..., d_n)=\diag(d_1, ..., d_n, 1/d_n, ..., 1/d_1)$. We will write the defining relations for $A$ and $C(r)$ in each case. Defining relations for $\Ker T$ are $d_1\overline{d_1}=1, ...,d_{n-1}\overline{d_n}=1, \overline{d_n}=d_n$.

\noindent{\bf Case 1.}\\
$A:$ $(d_{n}^{-1}\overline{d_n})^2=1$.\\
$C(r):$ $d_n^2=1$.\\
If $d_n \in \mathbb{K}$, then $(d_1, ..., d_n)=(d_1, ..., d_{n-1}, 1)(1, 1, ..., d_n)\in C(r)\Ker T$. \\
If $d_n \in \sqrt{\hslash} \mathbb{K}$, then $(d_1, ..., d_n)=(1, ..., \sqrt{\hslash})(1, ..., 1, d_n/\sqrt{\hslash})(d_1, ..., d_{n-1},1)$. We have $(1, ..., 1)\not\sim (1, ..., \sqrt{\hslash})$ because if $(a_1, ..., a_n) \in \Ker T \cup C(r)$ then  $a_n \in \mathbb{K}$.
%if $(a_1, ..., a_n) \in B \cup C(r)$ then $a_n \in \mathbb{K}$.
Hence $H_{BD}^1$ contains two elements.

Case 2 is similar to Case 1.

\noindent{\bf Case 3.1} (Case 3 with $k=n-2$).\\
$A: (d_n^{-1}\overline{d_n})^2=1;\ (d_{n-1}\overline{d_{n-1}})^2d_n^{-1}\overline{d_n}=d_{n-2}\overline{d_{n-2}}$.\\
$C(r): d_n^2=1, d_{n-1}^2d_n=d_{n-2}$.\\
If $d_n \in \mathbb{K}$, then
$$(d_1, ..., d_n)=(d_1, ..., d_{n-3}, d_{n-1}^2, d_{n-1},1)(1, ..., 1, d_{n-2}/d_{n-1}^2, 1, d_n).$$
If $d_n \in \sqrt{\hslash}\mathbb{K}$, then
$$(d_1, ..., d_n)=(1, ..., \sqrt{\hslash})(d_1, ..., d_{n-1}^2, d_{n-1},1)(1, ..., d_{n-2}/d_{n-1}^2, 1, d_n/\sqrt{\hslash}).$$

\noindent{\bf Case 3.2} (Case 3 with $k\neq n-2$).\\
 To simplify the notation we assume $k=n-3$ (general case is completely similar)\\
 $C(r): d_n^2=1; d_{n-3}=d_{n-2}d_{n-1}d_n$.\\
 $A: (d_n^{-1}\overline{d_n})^2=1; d_{n-3}\overline{d_{n-3}}=d_{n-2}d_{n-1}d_n^{-1} \overline{d_{n-2}d_{n-1}d_n}$.  \\
 If $d_n \in \mathbb{K}$, then
 $$(d_1, ..., d_n)=(d_1, ..., d_{n-2}d_{n-1}, d_{n-2}, d_{n-1}, 1) (1, ..., d_{n-3}/d_{n-2}d_{n-1}, 1, 1, d_n). $$
 If $d_n \in \sqrt{\hslash}\mathbb{K}$, then
  $$(d_1, ..., d_n)=(1,..., \sqrt{\hslash})(d_1, ..., d_{n-2}d_{n-1}, d_{n-2}, d_{n-1}, 1) (1, ..., d_{n-3}/d_{n-2}d_{n-1}, 1, 1, d_n/\sqrt{\hslash}). $$

Case 4 is similar to Case 3.
\end{proof}

\section{Concluding Remarks}
Consider the isomorphosm $\mathfrak{so}(3,3) \cong \mathfrak{sl}(4)$. Under this isomorphism $F$-points are mapped to $F$-points. We know that $|\overline{H}_{BD}^1(r_{BD}, \mathfrak{sl}(4), GL(4))|=1$ (see \cite{SP}). Our results imply that $\overline{H}_{BD}^1(r_{BD}, \mathfrak{so}(3,3))$ contains $2$ elements. This means that we have two twisted Lie bialgebra structure on $\mathfrak{so}(3,3)$ that are non-conjugate but isomorphic. This motivates the following conjecture:

\begin{conj}
If $r_{BD}$ is an $r$-matrix from the list of  Lemma \ref{r-list} then all twisted Lie bialgebra structures on $\mathfrak{g}$ corresponding to $r_{BD}$ are isomorphic.
\end{conj}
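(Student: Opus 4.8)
The plan is to read ``isomorphic'' in the strongest sense, namely conjugacy of the bialgebra structures under the full automorphism group $\mathrm{Aut}(\mathfrak{g})(F)$, and to show that the two $G(F)$-orbits produced in the non-Drinfeld--Jimbo cases fuse into a single $\mathrm{Aut}(\mathfrak{g})(F)$-orbit. The claim is vacuous for $r=r_{DJ}$, where $|\overline{H}^1_{BD}(G,r)|=1$, so the entire content sits in the four exceptional triples of Lemma \ref{r-list}, which occur only for $D_n$ with odd $n$ and give $|\overline{H}^1_{BD}(G,r)|=2$. In the notation of the preceding theorem the two classes are represented by the diagonal parameters $D=I$ and $D'=(1,\dots,1,\sqrt{\hslash})$, separated precisely by the image of $d_n$ in $\mathbb{K}^*/(\mathbb{K}^*)^2=\{1,\hslash\}$.

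First I would make precise the gap between $G(F)$-equivalence and isomorphism. The cohomology $\overline{H}^1_{BD}(G,r)$ was defined using a left factor $Q\in G(F)=SO(2n,F)$, whereas an isomorphism of Lie bialgebras is conjugation by an element of $\mathrm{Aut}(\mathfrak{g})(F)$. For $D_n$ with odd $n$ one has $\mathrm{Aut}(\mathfrak{so}(2n))(F)=\mathrm{PGO}(2n,F)$, with identity component the inner automorphism group $\mathrm{PSO}(2n,F)$. The image of $SO(2n,F)$ in $\mathrm{PSO}(2n,F)$ is in general proper: the central isogeny $1\to\mu_2\to SO_{2n}\to\mathrm{PSO}_{2n}\to 1$ yields a connecting map whose image records the obstruction to lifting an inner automorphism to $SO(2n,F)$, and this obstruction lands in $H^1(F,\mu_2)=F^*/(F^*)^2$. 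Concretely, a class in $\mathrm{PSO}(2n,F)$ lifts to a proper orthogonal similitude whose multiplier, read modulo squares, is exactly this obstruction. For $\mathbb{K}=\mathbb{C}((\hslash))$ the group $\mathbb{K}^*/(\mathbb{K}^*)^2=\{1,\hslash\}$ has two elements, matching the two cohomology classes.

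The heart of the argument is to exhibit a proper orthogonal similitude $g\in\mathrm{GO}^+(2n,\mathbb{K})$ with multiplier $\hslash$ whose induced inner automorphism $\mathrm{Ad}_g$ carries the structure of the first class to one $SO(2n,\mathbb{K})$-equivalent to the second. Since $\mathrm{Ad}_g$ preserves the Killing form, it fixes the Casimir $\Omega$ and hence the normalization $r+r^{21}=\sqrt{\hslash}\,\Omega$; and $g$ can be arranged to normalize the chosen pinning, so that $\mathrm{Ad}_g r_{BD}=\mathrm{Ad}_c r_{BD}$ for some $c\in C(r_{DJ})=H$ and the triple $(\Gamma_1,\Gamma_2,\tau)$ is preserved. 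Enlarging the admissible left factors in $X\sim QXC$ from $Q\in SO(2n,\mathbb{K})$ to include $g$ then adjoins exactly the square class $\hslash$ to the available equivalences, which is precisely the invariant separating the two orbits; hence they coincide. For $D_3$ this is consistent with, and can be cross-checked against, the exceptional isomorphism $\mathfrak{so}(3,3)\cong\mathfrak{sl}(4)$: there $GL(4)$ surjects onto $PGL(4)=\mathrm{PSO}(6)$, and the known value $|\overline{H}^1_{BD}(\mathfrak{sl}(4),GL(4))|=1$ realizes the very fusion asserted here.

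The main obstacle is the compatibility of $g$ with the twisting datum. A diagonal similitude commutes with $H$, so the swap of classes cannot come from the action of $\mathrm{Ad}_g$ on the torus directly; the mechanism must instead be that $g$ fails to commute with the matrix $J$ and with $\sigma_0$ in a controlled way, so that the image $r$-matrix is again of the twisted form $\sqrt{\hslash}\,\mathrm{Ad}_{X'}r_{BD}$ with $X'\in\overline{Z}(G,r)$ lying in the second class, rather than merely being abstractly isomorphic to a structure outside the family. Verifying that such a $g$ exists over $\mathbb{K}$ (and not only over $\overline{\mathbb{K}}$), and that it performs the swap for each of the four triples of Lemma \ref{r-list}---Cases $2$ and $4$ following from Cases $1$ and $3$ by the diagram automorphism---is exactly the delicate point that keeps the statement at the level of a conjecture.
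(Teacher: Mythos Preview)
The statement you are attempting to prove is labeled a \emph{Conjecture} in the paper and is not proved there. The paper offers only motivation: the exceptional isomorphism $\mathfrak{so}(6)\cong\mathfrak{sl}(4)$ together with the known fact $|\overline{H}^1_{BD}(r_{BD},\mathfrak{sl}(4),GL(4))|=1$ shows that for $D_3$ the two $SO(6,\mathbb{K})$-classes become isomorphic as Lie bialgebras, and the authors then conjecture the same for the remaining triples of Lemma~\ref{r-list}. There is therefore no paper proof to compare against.

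Your proposal is, by your own admission in the final paragraph, not a proof either but a strategy sketch. The idea of enlarging the allowed conjugating group from $SO(2n,\mathbb{K})$ to $\mathrm{Aut}(\mathfrak{g})(\mathbb{K})=\mathrm{PGO}(2n,\mathbb{K})$ and looking for a proper similitude $g$ with multiplier $\hslash$ is entirely in the spirit of the paper's $D_3$ evidence (where the passage from $SL(4)$ to $GL(4)$ plays exactly this role), and the cohomological bookkeeping via $H^1(\mathbb{K},\mu_2)=\mathbb{K}^*/(\mathbb{K}^*)^2$ correctly identifies why at most one extra square class can be absorbed. However, the step you flag as ``the delicate point''---producing such a $g$ over $\mathbb{K}$ that actually sends the twisted structure attached to $D=I$ to one $SO(2n,\mathbb{K})$-equivalent to that attached to $D'=(1,\dots,1,\sqrt{\hslash})$, while preserving the specific triple $(\Gamma_1,\Gamma_2,\tau)$---is the whole content of the conjecture, and you have not carried it out. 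A diagonal similitude commutes with $J$ up to a diagonal factor in $\Ker T$, so it does not obviously move the $d_n$-invariant; a non-diagonal $g$ normalizing the pinning would have to be checked against the twisted cocycle condition $SX^{-1}\sigma_0(X)\in C(r)$ case by case. Until that is done the argument remains a heuristic, which is precisely why the paper leaves the statement as a conjecture.
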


Consider the problem of quantizing Belavin-Drinfeld $r$-matrices. In \cite{ESS} Etingof, Schedler and Schiffman explicitely quantized the whole Belavin-Drinfeld list. Let us see how this is related to our considerations.
Note that any $r$-matrix of twisted type satisfies $r+r^{21}=\sqrt{\hslash} \Omega$, where $\Omega$ is a Casimir element. This means that the quasi-classical limit of the twisted type $r$-matrix is skew. Therefore, Belavin-Drinfeld non-twistes cohomologies parametrize the set of quantizations of a given Belavin-Drinfeld matrix (up to some equivalence that is weaker then isomorphism). Thus, in most cases considered (except for special BD-triples for $D_n$) the Etingof-Schedler-Schiffman quantization is unique.

\medskip

\textbf{Acknowledgment.} The authors are grateful to J.-H. Lu for valuable suggestions.

\end{sloppy}

\begin{thebibliography}{5}

\bibitem {BD} Belavin, A., Drinfeld, V.: Triangle equations and simple Lie
algebras. Soviet Sci. Rev. Sect. C: \textit{Math. Phys. Rev.} \textbf{4}
(1984) 93--165.


\bibitem {EK1} Etingof, P., Kazhdan, D.: Quantization of Lie bialgebras I.
Sel. Math. (NS)
\textbf{2} (1996) 1--41.

\bibitem {EK2} Etingof, P., Kazhdan, D.: Quantization of Lie bialgebras II.
Sel. Math.
(NS) \textbf{4} (1998) 213--232.

\bibitem {ES} Etingof, P., Schiffmann, O. Lectures on Quantum Groups. International Press, Cambridge, 1988.

\bibitem{ESS} Etingof, P., Schedler, T., Schiffmann, O.: Explicit quantization of dynamical \emph{r}-matrices for finite dimensional semisimple Lie algebras. \emph{J. Amer. Math. Soc.} \textbf{13} (2000), no. 3, 595--609

\bibitem{SP} Kadets B., Karolinsky E., Pop, I., Stolin, A.: Classification of quantum groups and
Belavin--Drinfeld cohomologies. ArXiv:1303.4046 [math.QA], 28 p.


\end{thebibliography}
\end{document}